%
\documentclass[runningheads]{llncs}

\usepackage[T1]{fontenc}
\def\doi#1{\href{https://doi.org/\detokenize{#1}}{\url{https://doi.org/\detokenize{#1}}}}
\usepackage{graphicx}

\usepackage{hyperref}
\usepackage{amsmath}
\usepackage{amssymb}
\usepackage{mathtools}
\usepackage{xcolor}
\usepackage{caption}
\usepackage{subcaption}

%
 
%
\usepackage{listings}
\lstset{language=Pascal}

\newcommand{\bmat}[1]{\textbf{#1}}
\newcommand{\mvec}[1]{\boldsymbol{#1}}
\DeclareMathOperator\supp{supp}
\newcommand{\GG}{\mathcal{G}}
\newcommand{\SSS}{\mathcal{S}}
\newcommand{\RR}{\mathcal{R}}
\newcommand{\B}{\{0,1\}}

\newcommand{\CC}{\mathcal{C}}

\makeatletter
\def\blfootnote{\gdef\@thefnmark{}\@footnotetext}
\makeatother

\begin{document}
\title{On Minimally Non-Firm Binary Matrices}
%
%
\author{R\'{e}ka \'{A}gnes Kov\'{a}cs}
\authorrunning{R. A. Kovacs}
%
\institute{University of Oxford \& The Alan Turing Institute\\
\email{reka.kovacs@maths.ox.ac.uk}}
\maketitle              
\begin{abstract}

For a binary matrix $\bmat{X}$, the Boolean rank $br(\bmat{X})$ is the smallest integer $k$ for which $\bmat{X}$ equals the Boolean sum of $k$ rank-$1$ binary matrices, and the isolation number $i(\bmat{X})$ is the maximum number of $1$s no two of which are in a same row, column and a $2\times 2$ submatrix of all $1$s. 
In this paper, we continue Lubiw's study of firm matrices. $\bmat{X}$ is said to be firm if $i(\bmat{X})=br(\bmat{X})$ and this equality holds for all its submatrices.
We show that the stronger concept of superfirmness of $\bmat{X}$ is equivalent to having no odd holes in the rectangle cover graph of $\bmat{X}$, the graph in which $br(\bmat{X})$ and $i(\bmat{X})$ translate to the clique cover and the independence number, respectively.
A binary matrix is \emph{minimally non-firm} if it is not firm but all of its proper submatrices are.
We introduce two matrix operations that lead to generalised binary matrices and use these operations to derive four infinite classes of minimally non-firm matrices.
We hope that our work may pave the way towards a complete characterisation of firm matrices via forbidden submatrices.


\keywords{Boolean rank  \and Rectangle covering number \and Firm matrices}
\end{abstract}
\section{Introduction}
\blfootnote{To appear in the Proceedings of ISCO 2022.}
The \emph{Boolean rank} of a binary matrix $\bmat{X}$, $br(\bmat{X})$, is the smallest integer $k$ for which $\bmat{X}$ equals the sum of $k$ rank-$1$ binary matrices, using Boolean arithmetic in which $1+1=1$ holds \cite{Kim:1982}.
A \textit{rectangle} of $\bmat{X}$ is a submatrix of all $1$s. 
Note that the support of a rank-$1$ binary matrix is precisely a rectangle, hence $br(\bmat{X})$  is the minimum number of rectangles needed to cover $\supp(\bmat{X}):=\{(i,j): x_{i,j}=1 \}$.

An \emph{isolated set} of $\bmat{X}$ is a set
$S\subseteq \supp(\bmat{X})$  such that for any distinct $(i_1,j_1)$, $ (i_2,j_2)$ in $S$, it holds $i_1\not = i_2$, $j_1\not = j_2$ and $x_{i_1,j_2}=0$ or $x_{i_2,j_1}=0$. 
The \emph{isolation number} of $\bmat{X}$,
$i(\bmat{X})$, is the maximum cardinality of an isolated set \cite{Gregory:1983}.  
In the field of communication complexity, quantities $br(\bmat{X})$ and $i(\bmat{X})$ are often referred to as the rectangle covering number and the fooling set bound \cite{Kushilevitz:1996:CC:264772}.

In the bipartite graph whose biadjacency matrix is $\bmat{X}$, $br(\bmat{X})$ is the minimum number of bicliques (complete bipartite subgraphs) needed to cover the edge set, while $i(\bmat{X})$ is the maximum cardinality of a matching in which no two edges are in a $4$-cycle.
Both  $br(\bmat{X})$ and $i(\bmat{X})$ are NP-hard to compute for general binary \cite{Orlin:1977,Pulleyblank:1982} and totally balanced matrices as well \cite{Muller:1990:AlternatingCycleFreeMatch,Muller:1996:EdgePerfect}.

For any binary matrix $\bmat{X}$, it can be readily checked that $i(\bmat{X}) \le br(\bmat{X})$.
This inequality may however be strict for many matrices.  In fact, the complement of the identity matrix 
shows that the gap between $i(\bmat{X})$ and $br(\bmat{X})$ may be arbitrarily large \cite{Caen:1981}.
%
We say $\bmat{X}$ is \emph{firm} if $i(\bmat{X})=br(\bmat{X})$ and this equality also holds for all its submatrices.
The concept of firmness, along with many results that form the basis of this paper were introduced by Lubiw in \cite{Lubiw:1990}. 
A key tool in Lubiw's work is to define the \emph{rectangle cover graph} of $\bmat{X}$ (the $1$'s graph in her words) in which $i(\bmat{X})$ and $br(\bmat{X})$ translate to the independence and clique cover number, respectively. 
Lubiw defines $\bmat{X}$ to be \emph{superfirm} if $\bmat{X}$'s rectangle cover graph is perfect and demonstrates that superfirm matrices are a strict subset of firm matrices.
In addition, she shows that covering rectilinear polygons by a minimum number of continuous rectangles is a special case of the rectangle cover problem on binary matrices \cite{Lubiw:1990}. 
In the bipartite setting, firmness is later redefined under the name `edge-perfection' \cite{Muller:1996:EdgePerfect}, while superfirmness is investigated under the name `cross-perfection' from a polyhedral perspective \cite{Dawande:2003:NCB:958867.958868}. 
The following important classes of matrices have been shown to be firm.
\emph{Interval} matrices, matrices whose columns can be permuted so the $1$s appear consecutively in each row, are proved to be firm by a deep result of Gy\H{o}ri \cite{Gyori:1984:Interval}.
\emph{Linear} matrices, matrices that have no $2\times 2$ submatrix of $1$s, 
and matrices that can be decomposed into linear matrices via the matrix equivalent of split decomposition on bipartite graphs 
are shown to be superfirm by Lubiw \cite{Lubiw:1990}. 
The firmness of biadjacency matrices of domino-free bipartite graphs is implied by a result of Amilhastre et al. \cite{Amilhastre:1998}.


In this paper, we start the investigation of minimally non-firm matrices. A binary matrix $\bmat{X}$ is \emph{minimally non-firm} if $i(\bmat{X})<br(\bmat{X})$ and $i(\bmat{X}')=br(\bmat{X}')$ for all proper submatrices $\bmat{X}'$ of $\bmat{X}$.
Our main tool is looking at the problem through the rectangle cover graph.
First, we extend a theorem of Lubiw and show that interestingly odd antiholes cannot appear without odd holes in rectangle cover graphs.
Then we characterise the necessary and sufficient submatrices for $5$-holes to appear.
We define \emph{simplicial} $1$s and a procedure for their removal which leads to \emph{generalised} binary matrices. 
We introduce the \emph{stretching} matrix operation  which then along with the simplicial $1$ removal procedure are used to give a general recipe for the construction of minimally non-firm matrices. We then prove by using this general recipe that four infinite classes of matrices are  minimally non-firm. 
To the best of our knowledge, minimally non-firm matrices have not been studied before.
%
We believe that studying them is a natural approach to better understand firmness, 
akin to the study of perfect graphs via minimally imperfect graphs. 
%
%
%
We hope that our results may pave the way towards a complete characterisation of firm and superfirm matrices via forbidden submatrices.

This paper is organised as follows. Section \ref{section_preliminaries} gives a brief recap on the work of Lubiw introducing the concept of rectangle cover graphs, superfirmness and generalised binary matrices.
In Section \ref{section_simplicial_and_stretching}, simplicial $1$s and the stretching operation are introduced.
In Section \ref{section_theorem_odd_antihole}, we show that a matrix is superfirm if and only if it has no odd holes in its rectangle cover graph.
In Section \ref{section_mnf} we prove our main theorem, which we then use to derive four infinite classes of minimally non-firm binary matrices. 
We conclude in Section \ref{section_conclusion} and mention two open problems.

\section{Preliminaries}
\label{section_preliminaries}

Let $\bmat{X}\in \B^{m\times n}$. 
For  $I\subseteq [n]:=\{1,\dots,n \}$ and $J\subseteq [m]$, a \emph{submatrix} of $\bmat{X}$ identified by $I\times J$ is obtained by deleting the rows  not in $I$ and the columns not in $J$.  If $I\subsetneq [n]$ or $J\subsetneq[m]$ then $I\times J$ is a \emph{proper submatrix} of $\bmat{X}$.
A submatrix is a \emph{rectangle} if $I\times J\subseteq \supp(\bmat{X})= \{(i,j): x_{i,j}=1 \}$. 
As the $1$s in a row or column form a rectangle, we have $br(\bmat{X})\le \min\{m,n\}$. In addition, note that $br(\bmat{X})$ is invariant under transposition and under duplicating rows and columns.

For an isolated set $S$ and rectangle $I\times J$, we have $|S \cap (I\times J)|\le 1$, hence $i(\bmat{X})\le br(\bmat{X})$. 
Recall that $\bmat{X}$ is \emph{firm} if $i(\bmat{X}')=br(\bmat{X}')$ holds for all submatrices $\bmat{X}'$ of $\bmat{X}$, including $\bmat{X}$.
The \emph{rectangle cover graph} $\GG(\bmat{X})$ of $\bmat{X}$ is the graph on vertex set $\supp(\bmat{X})$, where two vertices are adjacent if they can be covered by a common rectangle of $\bmat{X}$. 
We adopt the convention that vertices of $\GG(\bmat{X})$ are drawn in the positions of the corresponding $1$s'  of $\bmat{X}$.
See Figure \ref{figure_D4} for an example of $\GG(\bmat{X})$ for matrix $\bmat{D}_4$. 
Clearly, the independent sets of $\GG(\bmat{X})$ are just the isolated sets of $\bmat{X}$. 
Lubiw shows that maximal cliques of $\GG(\bmat{X})$ are in direct correspondence with maximal rectangles of $\bmat{X}$ \cite{Lubiw:1990}. Therefore we have $i(\bmat{X}) = \alpha(\GG(\bmat{X}))$ and $br(\bmat{X})=\theta(\GG(\bmat{X}))$, where $\alpha(G)$ and $\theta(G)$ denote the independence and clique cover number of a graph $G$, respectively.
A graph $G$ is \emph{perfect} if $\alpha(H)=\theta(H)$ holds for every induced subgraph $H$ of $G$.
A \emph{hole} is an induced chordless cycle of length at least four. An \emph{odd hole} is a hole of odd length and an \emph{odd antihole} is the complement of an odd hole. 
Perfect graphs are exactly those that have no odd holes and no odd antiholes by the Strong Perfect Graph Theorem \cite{Chudnovsky:2006:SPGT}.
$\bmat{X}$ is said to be \emph{superfirm} if $\GG(\bmat{X})$ is perfect \cite{Lubiw:1990}. 
Superfirm matrices are a strict subset of firm matrices \cite{Lubiw:1990}, as for instance $\bmat{D}_4$ is an interval matrix hence firm by Gy\H{o}ri's Theorem \cite{Gyori:1984:Interval} but not superfirm as $\GG(\bmat{D}_4)$ contains a $5$-hole as shown in Figure \ref{figure_D4}. Note that this is because not every induced subgraph of $\GG(\bmat{X})$ corresponds to a submatrix of $\bmat{X}$ and firmness requires $\alpha(H)=\theta(H)$ to hold for only those subgraphs $H$ of $\GG(\bmat{X})$ where $H=\GG(\bmat{X}')$ for a submatrix $\bmat{X}'$ of $\bmat{X}$.
\begin{figure}[tb!]
	\centering
	\begin{minipage}[c]{.3\textwidth}
		\centering
		$$\bmat{D}_4=
		\begin{bmatrix}
		0 & 1 &   1 & 0\\
		1 & 1 &   1 &0  \\
		1& 1&1 & 1   \\
		0 & 0 & 1 & 1
		\end{bmatrix}$$
	\end{minipage}
	\begin{minipage}[c]{.3\textwidth}
		\centering
		\includegraphics[scale=0.7]{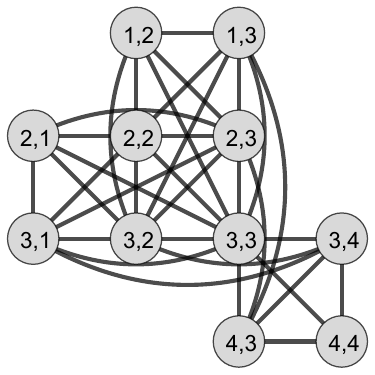}
	\end{minipage}
	\begin{minipage}[c]{.3\textwidth}
		\centering
		\includegraphics[scale=0.68]{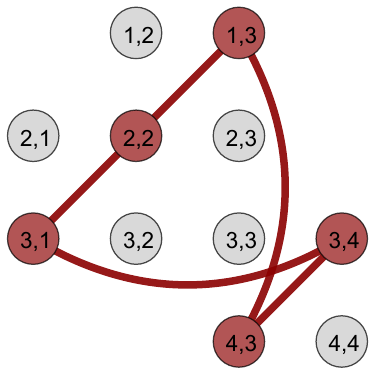}
	\end{minipage}
	\caption{\label{figure_D4}$\bmat{D}_4$, its rectangle cover graph $\GG(\bmat{D}_4)$ and  the $5$-hole in $\GG(\bmat{D}_4)$ highlighted}
\end{figure}

Replacing a $1$ of $\bmat{X}$ at $(i,j)$ with a $0$ does not necessarily correspond to the deletion of vertex $(i,j)$ from $\GG(\bmat{X})$ as edges not incident to $(i,j)$ may get deleted. 
To represent all induced subgraphs of $\GG(\bmat{X})$ in matrix form, Lubiw introduces a new entry type $?$ which may be part of a rectangle but need not be covered in a feasible covering.
A matrix over $\{0,1, ? \}$ is called a \emph{generalised binary matrix} \cite{Lubiw:1990}. 
A \emph{rectangle} of a generalised binary matrix $\bmat{Y}$ is a submatrix containing no $0$s, while an \emph{isolated set} of $\bmat{Y}$ is a subset of $\supp(\bmat{Y}):= \{(i,j): y_{i,j}=1 \}$ in which no two elements are contained in a common rectangle of $\bmat{Y}$. 
Then $i(\bmat{Y})$, $br(\bmat{Y})$ and firmness are analogously defined as for standard binary matrices.
For $\bmat{X}\in\B^{m\times n}$ and $P\subseteq \supp(\bmat{X})$, let $\bmat{X}^{P}$ be the generalised binary matrix obtained from $\bmat{X}$ by replacing all $1$s in $P$ by $?$s, i.e. $x_{i,j}^P=?$ for $(i,j)\in P$ and $x_{i,j}^P=x_{i,j}$ otherwise.
For $\bmat{X}^P$ define its rectangle cover graph $\GG(\bmat{X}^P)$ to be the subgraph of $\GG(\bmat{X})$ induced by $\supp(\bmat{X})\setminus P$.
Superfirmness of $\bmat{X}$ is then equivalent to the requirement that $i(\bmat{X}^P)=br(\bmat{X}^P)$ for all $P\subseteq \supp(\bmat{X})$ \cite{Lubiw:1990}.

\section{Simplicial $1$s and Stretching}
\label{section_simplicial_and_stretching}

Let $\bmat{Y}$ be a generalised binary matrix.
We say $(\ell,k)\in\supp(\bmat{Y})$ is a \emph{simplicial $1$} of $\bmat{Y}$ if $I\times J$ with $I=\{i : y_{i,k}\in \{1,?\}  \}$ and  $J=\{j : y_{\ell, j}\in \{1,?\} \}$ satisfies $I\times J\subseteq \{(i,j) : y_{i,j}\in \{1,? \} \}$, that is $I\times J$ is a rectangle of $\bmat{Y}$. Note that 
$I\times J$ is a maximal rectangle and the only maximal rectangle of $\bmat{Y}$ that covers the simplicial $1$ at $(\ell,k)$.
To \emph{remove the simplicial $1$} at $(\ell,k)$ of $\bmat{Y}$ we delete row $\ell$ and column $k$ and set all remaining entries that are in $I\times J$ to $?$s. 


\begin{lemma}
	\label{lemma_removing_simplicial_rectangle}
	If $\bmat{\emph{Y}}'$ is obtained by removing a simplicial $1$ of a generalised binary matrix $\bmat{\emph{Y}}$, then $i(\bmat{\emph{Y}})=i(\bmat{\emph{Y}}')+1$ and $br(\bmat{\emph{Y}})=br(\bmat{\emph{Y}}')+1$.
\end{lemma}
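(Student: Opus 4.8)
The plan is to pass to the rectangle cover graph and recognise the removal of a simplicial $1$ as the deletion of the closed neighbourhood of a simplicial vertex. Throughout write $v=(\ell,k)$ and $Q=\{(i,j)\in I\times J : y_{i,j}=1\}$, and recall from the preliminaries that $i(\bmat{Y})=\alpha(\GG(\bmat{Y}))$ and $br(\bmat{Y})=\theta(\GG(\bmat{Y}))$, and that these identifications carry over to generalised binary matrices.

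First I would show that $v$ is a simplicial vertex of $\GG(\bmat{Y})$ with closed neighbourhood exactly $Q$. A $1$-position $(i,j)$ is adjacent to $v$ precisely when the four entries $y_{\ell,k}, y_{\ell,j}, y_{i,k}, y_{i,j}$ are all nonzero, i.e. when $i\in I$, $j\in J$ and $(i,j)\in\supp(\bmat{Y})$; since $I\times J$ is a rectangle this is simply $(i,j)\in Q$. As $Q\subseteq I\times J$ and $I\times J$ is a rectangle, $Q$ induces a clique in $\GG(\bmat{Y})$, so $v$ is simplicial and $N[v]=Q$.

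Next I would identify $\GG(\bmat{Y}')$ with $\GG(\bmat{Y})-N[v]$. Removing the simplicial $1$ deletes row $\ell$ and column $k$ and turns every nonzero entry of $(I\setminus\{\ell\})\times(J\setminus\{k\})$ into a $?$; one checks that the $1$-positions thereby destroyed are exactly those of $Q=N[v]$, so that the surviving vertices are precisely $\supp(\bmat{Y})\setminus N[v]$, all lying outside $I\times J$. The key point, and the step I expect to need the most care, is that these changes do not alter adjacencies among the surviving vertices: a $1$ that becomes a $?$ stays nonzero and hence still completes every rectangle it previously completed, while deleting row $\ell$ and column $k$ is irrelevant because adjacency of two surviving vertices is decided by the pattern on their own rows and columns only. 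Thus adjacency is preserved on $\supp(\bmat{Y})\setminus N[v]$, giving $\GG(\bmat{Y}')=\GG(\bmat{Y})-N[v]$.

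Finally I would invoke the elementary fact that deleting the closed neighbourhood of a simplicial vertex drops both parameters by exactly one. For the independence number: appending $v$ to a maximum independent set of $G-N[v]$ yields an independent set of $G$, so $\alpha(G)\ge\alpha(G-N[v])+1$, while a maximum independent set of $G$ meets the clique $N[v]$ at most once, so $\alpha(G)\le\alpha(G-N[v])+1$. For the clique cover number: adjoining the single clique $N[v]$ to a minimum clique cover of $G-N[v]$ covers $G$, so $\theta(G)\le\theta(G-N[v])+1$, whereas in any clique cover of $G$ the clique containing $v$ lies inside $N[v]$ and hence covers no vertex of $G-N[v]$, forcing $\theta(G-N[v])\le\theta(G)-1$. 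Applying this with $G=\GG(\bmat{Y})$ and the identifications above gives $i(\bmat{Y})=i(\bmat{Y}')+1$ and $br(\bmat{Y})=br(\bmat{Y}')+1$, as required.
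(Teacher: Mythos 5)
Your proof is correct and is in substance the same as the paper's: both directions of both inequalities rest on exactly the constructions you give (append $v$, resp.\ the clique $N[v]=Q$, i.e.\ the rectangle $I\times J$, to optimal solutions of the reduced instance; conversely discard the at most one element of a maximum isolated set lying in $I\times J$, resp.\ the clique/rectangle covering $v$). The only difference is presentational --- you route the argument through $\GG(\bmat{Y})$ and the simplicial-vertex fact, and in doing so you make explicit the identification $\GG(\bmat{Y}')=\GG(\bmat{Y})-N[v]$, which the paper's direct matrix-level proof treats as immediate.
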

\begin{proof} 
	Let $(\ell,k)$ be the simplicial $1$ and $I\times J$ its unique maximal rectangle.
	For a maximum isolated set $S'$ and a minimum rectangle cover $\RR'$ of $\bmat{Y}'$,
	$S'\cup \{(\ell,k) \}$ and $\RR' \cup (I\times J)$ are clearly feasible for $\bmat{Y}$.
	Conversely, if $S$ is a maximum isolated set of $\bmat{Y}$, then $S\cap (I\times J) = \{(i,j) \}$ for some $(i,j)\in I\times J$, as otherwise $S\cup \{(\ell, k)\}$ would be a larger isolated set of $\bmat{Y}$. So $S\setminus\{(i,j) \}$ is a feasible isolated set of $\bmat{Y}'$. As $(\ell,k)$ is a simplicial $1$, we may assume that $I\times J$ is used in a minimum cover $\RR$ of $\bmat{Y}$. Then $\RR\setminus \{I\times J \}$ is a feasible cover of $\bmat{Y}'$.\hfill $\square$
\end{proof}

Our definition of simplicial $1$s for a standard binary matrix $\bmat{X}$ is identical to the definition of \emph{bisimplicial edges} \cite{Golumbic:2004} in the bipartite graph whose biadjacency matrix is $\bmat{X}$.
The key difference  is how we remove a simplicial $1$ and transition into generalised binary matrices.

We have seen that not every induced subgraph of $\GG(\bmat{X})$ corresponds to a submatrix of $\bmat{X}$, but by turning $1$s to $?$s we can consider arbitrary induced subgraphs of $\GG(\bmat{X})$ in matrix form.
The idea behind the next matrix operation is to expose induced subgraphs of rectangle cover graphs  without explicitly setting matrix entries to $?$s. Let $\bmat{X}\in \B^{m\times n}$. By \emph{stretching} a $1$ at $(\ell,k)\in\supp(\bmat{X})$ we get the ${(m+1)\times (n+1)}$ binary matrix $\SSS^{(\ell,k)} (\bmat{X})$   which satisfies
\begin{align}
\mathcal{S}^{(\ell,k)}(\bmat{X})_{i,j}&= x_{i,j} && i\in [m], j\in[n],\\
\mathcal{S}^{(\ell,k)}(\bmat{X})_{i,j}&= 1		& &(i,j)\in \{ (\ell, n+1), (m+1,k), (m+1,n+1)  \},
\end{align}
and $\mathcal{S}^{(\ell,k)}(\bmat{X})_{i,j}=0$ otherwise. 
For instance, if $(m,n)\in\supp(\bmat{X})$ then by stretching $(m,n)$ we obtain
\begin{equation}
\label{eq_stretching_def}
\mathcal{S}^{(m,n)}(\bmat{X})=\begin{bsmallmatrix}
x_{1,1} &  \dots&& x_{1,n}& 0\\
\vdots & \ddots&&\vdots& \vdots\\
&& &x_{m-1,n} & 0\\
x_{m,1}  &\dots &x_{m, n-1}& 1& 1\\
0 & \dots & 0 & 1 & 1
\vrule width 0pt depth 3.2pt
\end{bsmallmatrix}.
\end{equation}
Stretching $(\ell,k)$ adds in a simplicial $1$ at position $(m+1,n+1)$ whose unique maximal rectangle covers only $(\ell,k)$ from $\supp(\bmat{X})$. By Lemma \ref{lemma_removing_simplicial_rectangle}, removing the simplicial $1$ at $(m+1,n+1)$, 
 we get 
\begin{equation}
\label{eq_stretching_iso_br}
i(\SSS^{(\ell,k)} (\bmat{X})) = i(\bmat{X}^{(\ell,k)})+1,\quad  br(\SSS^{(\ell,k)} (\bmat{X})) = br(\bmat{X}^{(\ell,k)})+1,
\end{equation}
where $\bmat{X}^{(\ell,k)}$ is a shorter notation for $\bmat{X}^P$ with $P=\{(\ell,k) \}$.

For a non-empty set $Q\subseteq \supp(\bmat{X})$, the matrix obtained by stretching each $1$ in $Q$ is denoted by $\mathcal{S}^Q(\bmat{X})$. We adopt the convention to stretch $1$s in $Q$ in non-decreasing order of row and then column index, so $\mathcal{S}^Q(\bmat{X})$ may be written in block form as
\begin{equation}
\label{eq_stretching_convention}
\mathcal{S}^Q(\bmat{X}) = 
\begin{bmatrix}
\bmat{X} & \bmat{U}\\
\bmat{L} &\;\; \bmat{I}_{|Q|}
\end{bmatrix}
\end{equation}
where $\bmat{U}$ is an $m \times |Q|$ matrix with $|Q|$ $1$s exactly one in each column that have non-decreasing row index from left to right, $\bmat{L}$ is an $|Q|\times n$ matrix with $|Q|$ $1$s exactly one in each row and $\bmat{I}_{t}$ is the $t\times t$ identity matrix.

If $(\ell,k)$ is a simplicial $1$ of $\bmat{X}$ then we say that $\SSS^{(\ell,k)} (\bmat{X})$ is obtained by \emph{simplicial stretching}. Looking at $\GG(\bmat{X})$ and using  Lemma \ref{lemma_removing_simplicial_rectangle}  and the  Clique Cutset Lemma \cite{Berge:1989:Hypergraphs:CombinatoricsOfFiniteSets}, the following can be proved.
\begin{lemma} Let $\bmat{X}$ be superfirm. Then $\SSS^{(\ell,k)}(\bmat{X})$ is firm.
	Furthermore, if $(\ell,k)$ is a simplicial $1$ of $\bmat{X}$, then $\SSS^{(\ell,k)}(\bmat{X})$ is superfirm.
\end{lemma}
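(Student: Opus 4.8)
The plan is to work entirely in the rectangle cover graph $G':=\GG(\SSS^{(\ell,k)}(\bmat{X}))$ and to exploit its explicit structure. Writing $G:=\GG(\bmat{X})$, the operation $\SSS^{(\ell,k)}$ introduces exactly three new $1$s, namely $a=(\ell,n+1)$, $b=(m+1,k)$ and $c=(m+1,n+1)$. Since row $m+1$ and column $n+1$ each carry only two $1$s, one checks directly that $a$ is adjacent in $G'$ to the row-$\ell$ vertices $R_\ell:=\{(\ell,j):x_{\ell,j}=1\}$ together with $b,c$; that $b$ is adjacent to the column-$k$ vertices $C_k:=\{(i,k):x_{i,k}=1\}$ together with $a,c$; and that $c$ is adjacent only to $a,b,(\ell,k)$. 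In particular $c$ is a \emph{simplicial} vertex of $G'$, its closed neighbourhood $\{a,b,c,(\ell,k)\}$ being a clique. I would record once and for all that the rectangle cover graph of a submatrix is the induced subgraph of $G$ on the corresponding $1$s (adjacency of two $1$s being a local $2\times 2$ condition), so that every submatrix of a superfirm matrix is again superfirm.

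For the first statement I would verify $i(\bmat{Y}')=br(\bmat{Y}')$ for each submatrix $\bmat{Y}'$ of $\bmat{Y}:=\SSS^{(\ell,k)}(\bmat{X})$ by a short case analysis according to whether row $m+1$ and column $n+1$ are retained. If neither new line contributes a $1$, then $\bmat{Y}'$ is, up to zero lines, a submatrix of $\bmat{X}$ and the equality holds by superfirmness. If exactly one new line survives and contributes a single $1$, then $\GG(\bmat{Y}')$ is obtained from an induced subgraph of $G$ by appending a simplicial vertex whose neighbourhood is a subset of a row- or column-clique; since appending a simplicial vertex preserves perfection by the Clique Cutset Lemma, $i=br$. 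The only genuinely different case is when both new lines survive together with the corner $1$: then $\bmat{Y}'$ is itself a stretch $\SSS^{(\ell,k)}(\bmat{X}[I'',J''])$ of a submatrix of $\bmat{X}$, and the equality follows from \eqref{eq_stretching_iso_br} together with the superfirmness of $\bmat{X}[I'',J'']$, which yields $i=br$ for the associated $?$-matrix $(\bmat{X}[I'',J''])^{(\ell,k)}$. Taking $I''=[m]$, $J''=[n]$ also disposes of $\bmat{Y}$ itself.

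For the second statement I would use that $(\ell,k)$ simplicial in $\bmat{X}$ is equivalent to $(\ell,k)$ being a simplicial vertex of $G$ with unique maximal clique $I\times J$. The point is that then $R_\ell=\{\ell\}\times J$ and $C_k=I\times\{k\}$ both lie in the rectangle $I\times J$, so $C:=R_\ell\cup C_k$ is a clique of $G$ with $R_\ell\cap C_k=\{(\ell,k)\}$. Deleting $C$ from $G'$ severs $a,b,c$ from the rest, as their only remaining neighbours lie in $\{a,b,c\}$, so $C$ is a clique cutset of $G'$. By the Clique Cutset Lemma it suffices to show the pieces are perfect: those meeting $\bmat{X}$ are induced subgraphs of the perfect graph $G$, and the remaining piece is $P:=G'[C\cup\{a,b,c\}]$. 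Here $c$ is again simplicial, so $P$ is perfect iff $P-c$ is, and $P-c$ is a clique $C$ together with the two adjacent vertices $a,b$. Since any induced subgraph of $P-c$ on $N$ vertices contains a clique on at least $N-2$ of them, neither an odd hole $C_{2t+1}$ (clique number $2$) nor an odd antihole $\overline{C_{2t+1}}$ (clique number $t$) with $t\ge 2$ can embed; by the Strong Perfect Graph Theorem $P-c$ is perfect, hence so are $P$ and finally $G'$.

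The main obstacle is the first part rather than the second: the case distinction over submatrices must be arranged so that the single irreducible family of submatrices is recognised as a stretch of a smaller superfirm matrix, where \eqref{eq_stretching_iso_br} and Lemma~\ref{lemma_removing_simplicial_rectangle} apply, while all other submatrices are seen to add only simplicial vertices. A secondary point requiring care is the degenerate situation $\supp(\bmat{X})=R_\ell\cup C_k$, in which $C$ fails to be a cutset; there $G'=P$ and the perfection argument above applies verbatim.
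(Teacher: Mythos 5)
Your argument is correct and follows exactly the route the paper indicates (the paper only sketches this proof, naming Lemma~\ref{lemma_removing_simplicial_rectangle} and the Clique Cutset Lemma as the ingredients): Equation~\eqref{eq_stretching_iso_br} plus simplicial-vertex absorption for firmness, and the clique cutset $R_\ell\cup C_k$ for superfirmness. The only point to tidy is the case split in part one --- when both new lines survive but row $\ell$ or column $k$ is dropped, $\bmat{Y}'$ is not literally a stretch, but as you anticipate it reduces to an induced subgraph of $\GG(\bmat{X})$ by peeling off the simplicial vertices $(m+1,n+1)$ and then $(m+1,k)$ or $(\ell,n+1)$, so the argument goes through.
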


This lemma is tight in two ways. First, non-simplicial stretching may destroy superfirmness. Second, both simplicial and non-simplicial stretching do not preserve firmness.
In Section \ref{section_mnf}, we will exploit the superfirmness and firmness destroying properties of stretching to create minimally non-firm matrices.

For $n\ge 3$, let $\bmat{C}_n\in\B^{n\times n}$ be  the $n$-th \emph{cycle matrix} with exactly two $1$s in each row and  column such that no proper submatrix has this property. A binary matrix is \emph{totally balanced} if it has no $\bmat{C}_n$ submatrices for any $n\ge3$. Totally balanced matrices are exactly those that have a $\mvec{\Gamma}$-free ordering \cite{Lubiw:1987:Doubly}, where $\mvec{\Gamma}=\begin{bsmallmatrix}
1 & 1 \\
1 & 0
\end{bsmallmatrix}$. The following result can be verified by a $\mvec{\Gamma}$-free ordering.
\begin{lemma}
	\label{lemma_stretching_preserves_TB}
	If $\bmat{X}$ is totally balanced then so is $\SSS^Q(\bmat{X})$ for any $Q\subseteq\supp(\bmat{X})$. 
\end{lemma}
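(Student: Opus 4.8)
The plan is to prove the statement by extending a $\mvec{\Gamma}$-free ordering of $\bmat{X}$ to one of $\SSS^Q(\bmat{X})$; since by \cite{Lubiw:1987:Doubly} totally balanced matrices are exactly those admitting a $\mvec{\Gamma}$-free ordering, this is enough. The first thing to record is the local structure of the stretched lines. Reading off the block form \eqref{eq_stretching_convention}, stretching the $1$ at $q=(\ell_q,k_q)\in Q$ inserts one new row $r_q$ and one new column $c_q$ such that $c_q$ carries exactly two $1$s, at the original row $\ell_q$ and at $r_q$, and $r_q$ carries exactly two $1$s, at the original column $k_q$ and at $c_q$; moreover the new-row/new-column block is the identity, so $(r_q,c_{q'})=1$ if and only if $q=q'$. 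The crucial observation is that each new column has a single $1$ among the original rows and each new row a single $1$ among the original columns.

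First I would fix a $\mvec{\Gamma}$-free ordering of $\bmat{X}$ and, preserving the relative order of the original rows and columns, place all new columns $\{c_q\}$ to the left of every original column and all new rows $\{r_q\}$ above every original row (the internal order among the new rows, and among the new columns, turns out to be irrelevant). Intuitively, because a new line carries only one $1$ against the original part, pushing it to the extreme left or top prevents it from ever acting as the ``deficient'' (right, respectively bottom) line of a $\mvec{\Gamma}$.

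Then I would verify $\mvec{\Gamma}$-freeness by splitting on the type of the two columns of a hypothetical $\mvec{\Gamma}$. Since all new columns precede all original columns, either both columns are original or the left column is new. In the first case, any new row among the two chosen rows would need two $1$s in original columns, which a new row does not have, so both rows are original and we sit inside the $\mvec{\Gamma}$-free ordering of $\bmat{X}$. In the second case the left column is some $c_q$, and a $\mvec{\Gamma}$ requires a $1$ of $c_q$ in both rows; as $c_q$ is supported only on $\{\ell_q,r_q\}$, the two rows must be $r_q$ (on top) and $\ell_q$. If the right column is original it is forced to equal $k_q$ by the top row $r_q$, and then the corner $(\ell_q,k_q)=1$ of $\bmat{X}$ fills the entry a $\mvec{\Gamma}$ needs to be $0$; if the right column is another new column $c_{q'}$ with $q'\neq q$, then $(r_q,c_{q'})=0$ already blocks the $\mvec{\Gamma}$.

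I expect the main obstacle to be identifying the correct placement rather than the verification itself. The natural block position of \eqref{eq_stretching_convention}, with the new rows at the bottom and the new columns on the right, genuinely produces $\mvec{\Gamma}$s (a new right column with its single original $1$ at $\ell_q$ pairs badly with any earlier column having $1$s at $\ell_q$ and below); one must instead reflect the gadget to the top-left. Once the placement is fixed, the argument uses only two facts, that $(\ell_q,k_q)\in\supp(\bmat{X})$ and that the new block is the identity, and so it goes through uniformly for all of $Q$ at once.
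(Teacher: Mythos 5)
Your proof is correct and follows exactly the route the paper indicates: the paper's entire justification is the remark that the lemma ``can be verified by a $\mvec{\Gamma}$-free ordering,'' and you carry out that verification, correctly supplying the one nontrivial detail it leaves implicit --- that the new rows and columns must be placed before (above/left of) the original ones rather than in the natural block position of \eqref{eq_stretching_convention}.
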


\section{Superfirm Matrices and Odd Holes}

\label{section_theorem_odd_antihole}

The Strong Perfect Graph Theorem \cite{Chudnovsky:2006:SPGT} tells us that a binary matrix $\bmat{X}$ is superfirm if and only if $\GG(\bmat{X})$ has no odd holes and no odd antiholes. 
But which are the necessary submatrices so that odd holes or odd antiholes appear in $\GG(\bmat{X})$? 
In this section, we show that forbidding odd antiholes in $\GG(\bmat{X})$ is unnecessary. 
Then we study when a $5$-hole in $\GG(\bmat{X})$ exists.

A theorem of Lubiw in \cite{Lubiw:1990} states that for $\GG(\bmat{X})$ to have an odd antihole of size $7$ or more, $\bmat{X}$ needs to have the $3\times 3$ cycle matrix $\bmat{C}_3$ as a submatrix. Note that $\bmat{C}_3$ is superfirm.
Let $\bmat{1}$ be the all $1$s column vector of appropriate size 
and define 
$\bmat{W}:=\begin{bsmallmatrix}
\bmat{C}_3 & \mvec{1} \\
\mvec{1}^\top & 1
\end{bsmallmatrix}$ and
 $\overline{\bmat{I}}_4:=\begin{bsmallmatrix}
\bmat{C}_3 & \mvec{1} \\
\mvec{1}^\top & 0
\end{bsmallmatrix}$ in $\B^{4\times4}$.
Considering a slight extension of Lubiw's proof, we show that these two larger matrices are necessary for the appearance of odd antiholes.
\begin{lemma}
	\label{lemma_no_C3_no_oddhole}
	If $\GG(\bmat{X})$ contains an odd antihole of size $7$ or more then
	$\bmat{X}$ has $\bmat{W}$ or $\overline{\bmat{\emph{I}}}_4$ as a submatrix.
\end{lemma}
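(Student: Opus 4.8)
The plan is to open up Lubiw's proof that an odd antihole of size $\ge 7$ forces a $\bmat{C}_3$ submatrix, and to carry along enough extra data to promote the resulting $3\times 3$ pattern to a $4\times 4$ one. First I would translate the hypothesis into matrix language: label the antihole vertices $v_0,\dots,v_{n-1}$ with $n\ge 7$ odd, indexed cyclically, and write $v_i=(r_i,c_i)$ for its position in $\supp(\bmat{X})$; adjacency in $\GG(\bmat{X})$ means ``lies in a common rectangle'', and in an antihole $v_i$ and $v_j$ are adjacent precisely when $j\not\equiv i\pm 1\pmod{n}$. I would isolate two facts for repeated use: (i) a consecutive pair $v_i,v_{i+1}$ is non-adjacent, so $r_i\ne r_{i+1}$, $c_i\ne c_{i+1}$, and $x_{r_i,c_{i+1}}=0$ or $x_{r_{i+1},c_i}=0$; and (ii) since maximal cliques of $\GG(\bmat{X})$ are maximal rectangles, any set of pairwise non-consecutive (hence pairwise adjacent) vertices lies in a single rectangle, forcing every corresponding cross-entry to equal $1$. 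I would also record that the independence number of the antihole is $2$, so its only size-$2$ independent sets are the consecutive pairs.

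Second, I would reproduce the heart of Lubiw's argument to pin down the $\bmat{C}_3$. The goal is three rows $R_1,R_2,R_3$ and three columns $C_1,C_2,C_3$ on which $\bmat{X}$ restricts to $\bmat{C}_3$, i.e. with $0$s on a transversal (say $x_{R_p,C_p}=0$) and $1$s elsewhere. Here the antihole vertices occupy off-diagonal $1$-positions of this block, the three transversal $0$s are supplied by the ``one cross-entry vanishes'' clause of consecutive pairs, and the off-diagonal $1$s are supplied by the common-rectangle clause of non-consecutive pairs. I would make explicit which antihole vertex contributes each of $R_1,R_2,R_3,C_1,C_2,C_3$, since this bookkeeping is exactly what the extension needs.

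Third, and this is the main new step, I would append a fourth row $R_4$ and a fourth column $C_4$ coming from the rows and columns of further antihole vertices, chosen (using that $n\ge 7$ leaves ample room) so that the vertex supplying $R_4$ is non-consecutive with, hence co-rectangular with, the vertices supplying $C_1,C_2,C_3$, and symmetrically for $C_4$. Fact (ii) then forces the border entries $x_{R_4,C_1},x_{R_4,C_2},x_{R_4,C_3}$ and $x_{R_1,C_4},x_{R_2,C_4},x_{R_3,C_4}$ all to equal $1$, yielding the blocks $\mvec{1}^\top$ and $\mvec{1}$. The single remaining free entry is the corner $x_{R_4,C_4}$: if it equals $1$ the $4\times 4$ submatrix is $\bmat{W}$, and if it equals $0$ it is $\overline{\bmat{I}}_4$. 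This dichotomy is precisely why both matrices, rather than one, are necessary.

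The hard part will be controlling the border and excluding stray $0$s, because the non-adjacency clause only guarantees that \emph{one} of two cross-entries is $0$ without saying which. I would need to choose $R_4$ and $C_4$ so that the guaranteed $0$s of the consecutive pairs all land on the intended transversal and never on the border, and so that $R_4,R_1,R_2,R_3$ and $C_4,C_1,C_2,C_3$ are genuinely distinct. I expect to settle this by a short case analysis on which cross-entry is the forced $0$ for each relevant consecutive pair, leaning on the abundance of non-consecutive (adjacent) pairs guaranteed by $n\ge 7$ and on the fact that the antihole's only size-$2$ independent sets are consecutive, so as to rule out every unwanted $0$.
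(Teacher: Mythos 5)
Your setup is right: after normalising so that the antihole vertices lie on the diagonal of a $k\times k$ submatrix, consecutive vertices force a $0$ in one of the two cross positions $(i,i+1),(i+1,i)$, non-consecutive vertices force $1$s in both cross positions, and the target is a $\bmat{C}_3$-on-a-transversal pattern bordered by an all-$1$ row and column with a free corner deciding between $\bmat{W}$ and $\overline{\bmat{I}}_4$. (One omission here: you must first duplicate rows and columns so that no two antihole vertices share a row or column; your fact (i) only rules this out for consecutive vertices.) The genuine gap is in how you propose to \emph{locate} the transversal of $0$s. Your argument nowhere uses that $k$ is odd, yet oddness is essential: for each consecutive pair you only know that \emph{some} cross entry is $0$, and if these forced zeros alternate sides of the diagonal all the way around the cycle (which is consistent precisely when $k$ is even), no three of them form the required transversal with all-$1$ complement. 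The paper's proof is organised around exactly this point: it shows that if two cyclically adjacent pairs carry their zeros on the same side, one immediately reads off $\bmat{W}$ or $\overline{\bmat{I}}_4$ (rows $\{1,2,5,6\}$, columns $\{2,3,5,6\}$ when $x'_{1,2}=x'_{2,3}=0$), and otherwise the zeros must zigzag around an odd cycle, which is impossible. The ``short case analysis'' you defer to is therefore not bookkeeping but the heart of the proof, and it is a parity argument.

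Two further points where the plan as written breaks. First, ``$n\ge 7$ leaves ample room'' is false for $n=7$: choosing three consecutive pairs whose six vertices are pairwise non-consecutive across pairs (so that all off-transversal cross entries are guaranteed $1$s) requires three gaps, i.e.\ $n\ge 9$; for $n=7$ any two of your three pairs have interfering (consecutive) vertices, and whether the interference lands on the transversal or ruins an off-transversal entry depends on which side each forced zero sits --- again the zigzag issue. Second, the bordering row $R_4$ and column $C_4$ cannot come from ``further'' unused antihole vertices: an unused vertex is consecutive to two used ones and so may put an uncontrolled $0$ on the border; in the working construction the border lines are reused rows/columns of partners of already-chosen pairs (e.g.\ the paper's bordering row and column belong to vertices $6$ and $5$, whose mutual cross entry is precisely the free corner). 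Your approach can be completed for odd $k\ge 9$ along the lines you sketch, but $k=7$ forces you back to the alternation-plus-parity argument, so you have not really found a route around the paper's proof.
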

\begin{proof} Following the proof structure of \cite[Theorem 6.3]{Lubiw:1990},
suppose that $\bmat{X}$ has no such submatrices but $\GG(\bmat{X})$ contains an antihole $A\subseteq \supp(\bmat{X})$ of odd size $k=|A|\ge 7$. By duplicating rows and columns of $\bmat{X}$, we may assume that no two $1$s in $A$ are in the same row or column. Note that row and column duplication cannot introduce $\bmat{W}$ or $\overline{\bmat{I}}_4$ submatrices into $\bmat{X}$.
Then the submatrix $\bmat{X}'$ of $\bmat{X}$ that consists of the rows and columns of the $1$s in $A$ is of dimension $k\times k$ and may be permuted so that the vertices of $A$ appear on the main diagonal and are non-adjacent to the two vertices that are directly above and below them. Then $\bmat{X}'$ has the form as below where each undecided entry pair $(i,j),(j,i)$ denoted by $*$s satisfies $|\supp(\bmat{X}') \cap \{(i,j),(j,i)  \}|\le 1$ so that $A$ is indeed an antihole in $\GG(\bmat{X})$.
\begin{equation}
\label{eq_odd_anithole_C_3_submatrix_proof}
\bmat{X}'=
\begin{bsmallmatrix}
		\vrule width 0pt height 6pt 
1 & * & 1 &1  & \dots &1& 1 & *\\
* & 1 & * &1  &  &  1&1 & 1\\
1 & * & 1&*   &  &  1&1 & 1\\
1&1 & * & 1   &  &  1&1 & 1\\
\vdots &    &  &&\ddots   &&&\vdots \\
1 & 1& 1   &1&  &1&  * & 1\\
1 &1 & 1   & 1& &*& 1 & *\\
* & 1& 1  &1& \dots&1 & * & 1
		\vrule width 0pt depth 3.2pt
\end{bsmallmatrix}
\quad 
\Rightarrow
\quad 
\begin{bsmallmatrix}
\vrule width 0pt height 6pt 
1 & 0 & 1 &1  & \dots &1& 1 & *\\
1 & 1 & 1 &1  &  &  1&1 & 1\\
1 & 0 & 1&0   &  &  1&1 & 1\\
1&1 & 1 & 1   &  &  1&1 & 1\\
\vdots &    &  &&\ddots   &&&\vdots \\
1 & 1& 1   &1&  &1&  * & 1\\
1 &1 & 1   & 1& &*& 1 & *\\
* & 1& 1  &1& \dots&1 & * & 1
\vrule width 0pt depth 3.2pt
\end{bsmallmatrix}
\end{equation}
Assume without loss of generality that $x'_{1,2}=0$. 
Suppose that $x'_{2,3}=0$.
If $x'_{5,6}=x'_{6,5}=0$ then the submatrix $I\times J$ of $\bmat{X}'$ with $I=\{1,2,5,6  \}$, $J=\{2,3,5,6 \}$ is $\overline{\bmat{I}}_4$.
Moreover, if $x'_{5,6}+x'_{6,5}=1$ then $I\times J$ is $\bmat{W}$. Hence,  $x'_{2,3}\not=0$. In general, exactly one of $(i,j)$ and $(i+1,j+1)$ can be a $0$ for all $*$s, so the zeros of $\bmat{X}'$ must zigzag as shown in the right of Equation \eqref{eq_odd_anithole_C_3_submatrix_proof}. 
But as $k$ is odd, this is impossible.$\hfill \square$
\end{proof}

The importance of Lemma \ref{lemma_no_C3_no_oddhole} over Lubiw's theorem, is that both $\bmat{W}$ and $\overline{\bmat{I}}_4$ contain the submatrix $\bmat{H}_3:=[\mvec{1},\bmat{C}_3]$ and $\GG(\bmat{H}_3)$ contains three $5$-holes as shown in Figure \ref{figure_H3}, whereas $\bmat{C}_3$ is superfirm. This shows that a rectangle cover graph cannot contain an odd antihole of size $7$ or larger if it does not contain an odd hole. Recalling that a $5$-antihole is just a $5$-hole, we obtain the following result.
\begin{figure}[tb!]
	     \centering
	\begin{subfigure}[b]{0.3\textwidth}
		\centering
		\includegraphics[scale=0.65]{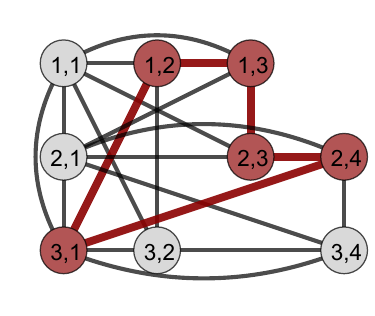}
	\end{subfigure}
	\begin{subfigure}[b]{0.3\textwidth}
		\centering
		\includegraphics[scale=0.65]{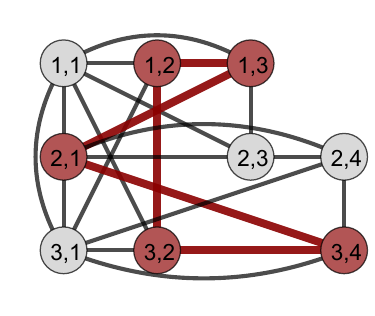}
	\end{subfigure}
	\begin{subfigure}[b]{0.3\textwidth}
		\centering
		\includegraphics[scale=0.65]{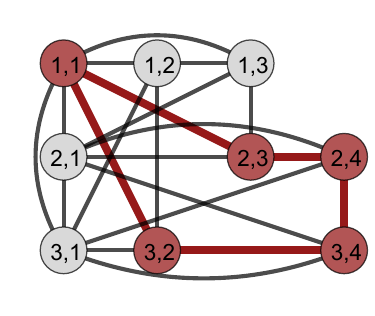}
\end{subfigure}	
\caption{\label{figure_H3} The three $5$-holes in $\GG(\bmat{H}_3)$}
\end{figure}
\begin{theorem}
	\label{theorem_no_odd_holes_then_superfirm}
	$\bmat{X}$ is superfirm if and only if $\GG(\bmat{X})$ has no odd holes.
\end{theorem}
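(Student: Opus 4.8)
The plan is to reduce the statement to the Strong Perfect Graph Theorem and then eliminate odd antiholes using Lemma~\ref{lemma_no_C3_no_oddhole}. Recall that $\bmat{X}$ is superfirm precisely when $\GG(\bmat{X})$ is perfect, and that by the Strong Perfect Graph Theorem~\cite{Chudnovsky:2006:SPGT} a graph is perfect if and only if it contains neither an odd hole nor an odd antihole. The forward implication is then immediate: if $\bmat{X}$ is superfirm, $\GG(\bmat{X})$ is perfect, and in particular has no odd holes.

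For the reverse implication I would assume that $\GG(\bmat{X})$ has no odd holes and show it is perfect; by the Strong Perfect Graph Theorem it suffices to rule out odd antiholes. I would split this according to the size of a putative odd antihole $A$. If $|A|=5$, then since a $5$-antihole is itself a $5$-hole, $A$ would be an odd hole of $\GG(\bmat{X})$, contradicting the hypothesis. If $|A|\ge 7$, I would invoke Lemma~\ref{lemma_no_C3_no_oddhole}, which guarantees that $\bmat{X}$ contains $\bmat{W}$ or $\overline{\bmat{I}}_4$ as a submatrix.

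The crux is then to translate this submatrix into a forbidden induced subgraph of $\GG(\bmat{X})$. Both $\bmat{W}$ and $\overline{\bmat{I}}_4$ contain $\bmat{H}_3=[\mvec{1},\bmat{C}_3]$ as a submatrix (their first three rows), so I would first record the general fact that whenever $\bmat{X}'$ is a submatrix of $\bmat{X}$, the graph $\GG(\bmat{X}')$ is exactly the subgraph of $\GG(\bmat{X})$ induced on $\supp(\bmat{X}')$. This holds because the adjacency of two support entries $(i,j)$ and $(i',j')$ depends only on the $2\times 2$ submatrix they span (they share a rectangle iff that submatrix is all $1$s), and this submatrix is unchanged by restricting $\bmat{X}$ to a selection of rows and columns that contains both entries. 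Consequently $\GG(\bmat{H}_3)$ is an induced subgraph of $\GG(\bmat{X})$, and since $\GG(\bmat{H}_3)$ contains a $5$-hole (Figure~\ref{figure_H3}), so does $\GG(\bmat{X})$, again contradicting the hypothesis. Hence no odd antihole exists and $\GG(\bmat{X})$ is perfect.

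The main obstacle I anticipate is the bookkeeping in the last step: one must ensure that ``$\bmat{X}$ has $\bmat{W}$ or $\overline{\bmat{I}}_4$ as a submatrix'' yields an honest \emph{induced}-subgraph copy of $\GG(\bmat{H}_3)$ inside $\GG(\bmat{X})$, rather than merely a subgraph, so that the $5$-hole remains chordless in $\GG(\bmat{X})$. Once the induced-subgraph transfer above is in place, the remaining steps are purely combinatorial and follow directly from the structure already assembled in Lemma~\ref{lemma_no_C3_no_oddhole} and Figure~\ref{figure_H3}.
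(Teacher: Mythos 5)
Your proposal is correct and follows essentially the same route as the paper: combine the Strong Perfect Graph Theorem with Lemma~\ref{lemma_no_C3_no_oddhole}, note that a $5$-antihole is a $5$-hole, and for antiholes of size at least $7$ pass from $\bmat{W}$ or $\overline{\bmat{I}}_4$ to the submatrix $\bmat{H}_3$ whose rectangle cover graph contains a $5$-hole. Your explicit verification that $\GG(\bmat{X}')$ is an induced subgraph of $\GG(\bmat{X})$ for a submatrix $\bmat{X}'$ (adjacency depends only on the spanned $2\times 2$ submatrix) is exactly the point the paper leaves implicit, and it is the right justification.
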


Theorem \ref{theorem_no_odd_holes_then_superfirm} motivates us to study when $\GG(\bmat{X})$ has odd holes.
We initialise this by characterising when a $5$-hole exists in $\GG(\bmat{X})$.
The proof is skipped 
but it is of similar nature to that of Lemma \ref{lemma_no_C3_no_oddhole}.
Let $\bmat{K}_5\in\B^{5\times 5}$ be the circulant matrix with exactly three $1$s per row and column and recall $\bmat{D}_4$ from Figure \ref{figure_D4}.
\begin{theorem}
	\label{theorem_5hole_iff_submatrix}
	$\GG(\bmat{X})$ contains a $5$-hole if and only if $\bmat{X}$ has at least one of $\bmat{D}_4$, $\bmat{H}_3$, $\bmat{H}_3^\top$ or  $\bmat{K}_5$ as a submatrix.
\end{theorem}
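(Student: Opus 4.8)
The plan is to follow the template of Lemma~\ref{lemma_no_C3_no_oddhole}, moving back and forth between the five $1$s of a $5$-hole and small submatrices. I use two facts repeatedly: two distinct $1$s of $\bmat{X}$ are adjacent in $\GG(\bmat{X})$ precisely when they share a row, share a column, or span an all-$1$s $2\times2$ submatrix; and if $\bmat{X}'$ is the submatrix of $\bmat{X}$ on rows $I$ and columns $J$, then $\GG(\bmat{X}')$ is exactly the subgraph of $\GG(\bmat{X})$ induced on $\supp(\bmat{X}')=\supp(\bmat{X})\cap(I\times J)$.

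For the sufficiency (``if'') direction I would first verify that each of the four matrices already carries a $5$-hole in its own rectangle cover graph: for $\bmat{D}_4$ and $\bmat{H}_3$ this is exhibited in Figures~\ref{figure_D4} and~\ref{figure_H3}; for $\bmat{H}_3^\top$ it is inherited from $\bmat{H}_3$ because transposition induces an isomorphism $\GG(\bmat{X}^\top)\cong\GG(\bmat{X})$; and for $\bmat{K}_5$ one checks that the cyclic transversal $\{(i,i+1):i\in[5]\}$ (indices mod $5$) is an induced $C_5$. Since the rectangle cover graph of a submatrix is an induced subgraph of $\GG(\bmat{X})$, having any of the four as a submatrix of $\bmat{X}$ transplants its $5$-hole into $\GG(\bmat{X})$ as an induced $C_5$.

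For the necessity (``only if'') direction, suppose $\GG(\bmat{X})$ has a $5$-hole on $1$s $v_1,\dots,v_5$ with edges $v_iv_{i+1}$ (indices mod $5$). No row or column can contain three of these $1$s, as that would create a triangle in the hole, so duplicating each shared row and column once places $v_1,\dots,v_5$ in distinct rows and distinct columns. Restricting to their rows and columns and permuting them onto the diagonal yields a $5\times5$ matrix $\bmat{X}'$ with $x'_{i,i}=1$, with $x'_{i,i+1}=x'_{i+1,i}=1$ for each consecutive (adjacent) pair, and with at least one of $x'_{i,i+2},x'_{i+2,i}$ equal to $0$ for each skip (non-adjacent) pair. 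As $\bmat{D}_4,\bmat{H}_3,\bmat{H}_3^\top,\bmat{K}_5$ all have pairwise distinct rows and pairwise distinct columns, duplication cannot create them and any copy found inside $\bmat{X}'$ must use distinct original rows and columns; hence it suffices to locate one of the four inside $\bmat{X}'$. If all five skip pairs equal $(0,0)$, then $\bmat{X}'$ is exactly the three-consecutive circulant $\bmat{K}_5$ (and, as $\bmat{K}_5$ has distinct rows and columns, no duplication was needed, so this copy already lives in $\bmat{X}$).

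In the remaining case some skip entry is a $1$; since transposition permutes the forbidden set into itself ($\bmat{D}_4$ is symmetric, $\bmat{H}_3^\top$ is the transpose of $\bmat{H}_3$, and $\bmat{K}_5^\top$ is again a three-consecutive circulant), I may assume $x'_{i,i+2}=1$ and $x'_{i+2,i}=0$ for some $i$. Combining this extra $1$ with the forced $1$s on the two consecutive pairs meeting $v_{i+2}$ lets one read off a copy of $\bmat{D}_4$, $\bmat{H}_3$, or $\bmat{H}_3^\top$; for example, when exactly one skip pair carries a $1$, the rows $\{i,i+2,i+3\}$ together with a suitable four columns already form $\bmat{H}_3$. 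The main obstacle is precisely this last step: a finite but delicate case analysis matching every admissible $0/1$ pattern on the ten skip entries (one $0$ per skip pair) to one of the four matrices. In contrast to the $k\ge7$ zigzag of Lemma~\ref{lemma_no_C3_no_oddhole}, here the cyclic band closes up, and it is this closing-up that produces the extra witnesses $\bmat{D}_4$ and $\bmat{K}_5$ beyond $\bmat{H}_3$ and $\bmat{H}_3^\top$. I would control the casework by exploiting the dihedral symmetry of the $5$-cycle—its rotations together with the reflection realised by transposition—to reduce to a few representative sign patterns.
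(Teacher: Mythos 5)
Your framework is the right one, and it is in fact the approach the paper itself indicates (the paper explicitly skips this proof, remarking only that it is ``of similar nature'' to that of Lemma~\ref{lemma_no_C3_no_oddhole}): reduce by row/column duplication to a $5\times 5$ matrix with the hole on the diagonal, $1$s forced on the two off-diagonals adjacent to it, and at least one $0$ in each of the five skip pairs $\{(i,i+2),(i+2,i)\}$; then find one of the four witnesses in each admissible pattern. Your justifications for the reduction are sound: no row or column holds three vertices of an induced $C_5$, duplication preserves the hole, and since $\bmat{D}_4$, $\bmat{H}_3$, $\bmat{H}_3^\top$ and $\bmat{K}_5$ all have pairwise distinct rows and pairwise distinct columns, any copy found after duplication pulls back to the original matrix. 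The sufficiency direction is also essentially fine, though your claimed $5$-hole $\{(i,i+1)\}$ in $\bmat{K}_5$ is only an induced $C_5$ under the circulant convention with $1$s at $j-i\in\{0,1,2\}$; under the convention $j-i\in\{0,\pm 1\}$ that same transversal is an independent set and the hole sits on the diagonal instead. Since the two circulants differ by a column permutation this is harmless, but you should fix a convention and exhibit the correct cycle.

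The genuine gap is that the heart of the ``only if'' direction --- the case analysis over the admissible $0/1$ patterns on the ten skip entries --- is announced but not carried out. You verify a single representative case (one skip pair carrying a $1$, yielding $\bmat{H}_3$ on rows $\{i,i+2,i+3\}$, which does check out) and then state that the remaining cases ``let one read off'' a witness, to be controlled by dihedral symmetry. That is a plan, not a proof: there are $3^5$ patterns, roughly $15$--$30$ orbits under the order-$10$ symmetry group you invoke, and the theorem's content is precisely the assertion that \emph{every} one of them contains $\bmat{D}_4$, $\bmat{H}_3$, $\bmat{H}_3^\top$ or $\bmat{K}_5$; in particular the cases producing $\bmat{D}_4$ (which needs a $0$ row-sum-$2$ row, hence a specific interaction between two skip pairs sharing a vertex) are exactly the ones where the ``read off'' step is least obvious and where Lemma~\ref{lemma_no_C3_no_oddhole}'s zigzag argument breaks because the band closes up, as you yourself observe. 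Until that enumeration is written down and each orbit is matched to a witness, the necessity direction is unproven.
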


\section{Four Infinite Classes of Minimally Non-Firm Matrices}
\label{section_mnf}

In this section we prove a theorem which shows how minimally non-firm matrices may arise by using the stretching operation. Then using  this theorem we show that four infinite classes of matrices are minimally non-firm.

Recall that a standard binary matrix $\bmat{X}$ is \emph{minimally non-firm (mnf)} if it is not firm but all proper submatrices of it are.
This definition naturally extends to generalised binary matrices $\bmat{Y}$, $\bmat{Y}$ is mnf if $i(\bmat{Y})<br(\bmat{Y})$ and $i(\bmat{Y}')=br(\bmat{Y}')$ for all proper submatrices $\bmat{Y}'$ of $\bmat{Y}$. Note that as $br(\bmat{Y})$ and $i(\bmat{Y})$ are invariant under transposition, the transpose of any mnf matrix is mnf as well.
The following two simple results apply to both standard and generalised mnf matrices.
\begin{lemma}
	\label{lemma_mnf_at_least_two_1s}
	Each row and column of an mnf matrix has at least two non-zeros.
\end{lemma}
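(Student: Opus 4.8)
The plan is to argue by contradiction. Suppose $\bmat{Y}$ is mnf but some row, say row $\ell$, has at most one non-zero (an entry in $\{1,?\}$). I will produce a \emph{proper} submatrix $\bmat{Y}'$ through which the minimality hypothesis forces $i(\bmat{Y})=br(\bmat{Y})$, contradicting $i(\bmat{Y})<br(\bmat{Y})$. Since $br$ and $i$ are invariant under transposition and the transpose of an mnf matrix is mnf, it suffices to treat rows; the statement for columns then follows by symmetry.

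First I would dispose of the case where row $\ell$ contains no $1$ — either it is all $0$s, or its unique non-zero is a $?$. In either situation $\supp(\bmat{Y})$ meets no entry of row $\ell$, so deleting row $\ell$ yields a proper submatrix $\bmat{Y}'$ with $\supp(\bmat{Y}')=\supp(\bmat{Y})$. Any rectangle of $\bmat{Y}$ that uses row $\ell$ is confined to the single column in which row $\ell$ is non-zero (if any), so it covers no $1$ that a rectangle of $\bmat{Y}'$ cannot; hence $br(\bmat{Y}')=br(\bmat{Y})$. Isolated sets, being subsets of the support, are likewise unaffected, so $i(\bmat{Y}')=i(\bmat{Y})$. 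Minimality gives $i(\bmat{Y}')=br(\bmat{Y}')$, and the contradiction $i(\bmat{Y})=br(\bmat{Y})$ follows.

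The remaining case is the crux: row $\ell$ has exactly one non-zero and it is a $1$, at position $(\ell,k)$. Here I would observe that $(\ell,k)$ is a simplicial $1$: with $J=\{j:y_{\ell,j}\in\{1,?\}\}=\{k\}$, the candidate rectangle $I\times J$ equals $I\times\{k\}$ where $I=\{i:y_{i,k}\in\{1,?\}\}$, and this is automatically a rectangle by the definition of $I$. The essential point is that because $J=\{k\}$, removing this simplicial $1$ — delete row $\ell$, delete column $k$, and turn the surviving entries of $I\times J$ into $?$s — performs \emph{no} $?$-substitution, since every entry of $I\times J$ lies in the deleted column $k$. Consequently the result $\bmat{Y}'$ is genuinely a proper submatrix of $\bmat{Y}$, so the mnf hypothesis applies to it. Then Lemma \ref{lemma_removing_simplicial_rectangle} yields $i(\bmat{Y})=i(\bmat{Y}')+1$ and $br(\bmat{Y})=br(\bmat{Y}')+1$, which together with $i(\bmat{Y}')=br(\bmat{Y}')$ again force $i(\bmat{Y})=br(\bmat{Y})$.

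The one step demanding care is verifying, in this last case, that simplicial-$1$ removal coincides with an ordinary submatrix deletion rather than producing a new generalised matrix with extra $?$s — this is exactly where $J=\{k\}$ is used, and it is what legitimises invoking minimality on $\bmat{Y}'$. The earlier no-$1$ case is routine once one notes that a support-free row cannot be part of any isolation-relevant rectangle.
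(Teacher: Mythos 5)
Your proof is correct and follows essentially the same route as the paper's: drop a support-free row, or recognise the lone $1$ as a simplicial $1$ and remove it via Lemma~\ref{lemma_removing_simplicial_rectangle} to contradict minimality. Your explicit verification that $J=\{k\}$ makes the simplicial removal coincide with an ordinary proper-submatrix deletion is a detail the paper leaves implicit, but it is the same argument.
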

\begin{proof}
	Suppose $\bmat{Y}$ is mnf and its $i$-th row only has a single nonzero at entry $(i,j)$. If $y_{i,j}=?$ then row $i$ can clearly be dropped without changing $i(\bmat{Y})$ or $br(\bmat{Y})$. 
	If $y_{i,j}=1$ then $(i,j)$ is a simplicial $1$. By Lemma \ref{lemma_removing_simplicial_rectangle}, removing it we obtain a firm submatrix $\bmat{Y}'$ with $i(\bmat{Y})-1=i(\bmat{Y}')=br(\bmat{Y}')=br(\bmat{Y}) -1$, a contradiction.\hfill $\square$
\end{proof}
\begin{lemma}
	\label{lemma_mnf_diff_1}
	If $\bmat{Y}$ is mnf  then $i(\bmat{Y})=br(\bmat{Y})-1$.
\end{lemma}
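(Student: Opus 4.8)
The plan is to establish the one missing inequality $i(\bmat{Y})\ge br(\bmat{Y})-1$; together with the defining inequality $i(\bmat{Y})<br(\bmat{Y})$ of a non-firm matrix, this forces $i(\bmat{Y})=br(\bmat{Y})-1$. The whole argument rests on passing to a single proper submatrix and exploiting that, by minimal non-firmness, it satisfies $i=br$.

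First I would delete one row, say row $\ell$, of $\bmat{Y}$ to obtain a proper submatrix $\bmat{Y}'$; since $\bmat{Y}$ is mnf we have $i(\bmat{Y}')=br(\bmat{Y}')$. Next I record two monotonicity facts. Any isolated set of $\bmat{Y}'$ is still an isolated set of $\bmat{Y}$: restricting a rectangle of $\bmat{Y}$ to the rows of $\bmat{Y}'$ yields a rectangle of $\bmat{Y}'$, so two $1$s that share no common rectangle in $\bmat{Y}'$ (and in particular lie outside row $\ell$) share none in $\bmat{Y}$; hence $i(\bmat{Y}')\le i(\bmat{Y})$. The key step is the reverse control on $br$: the entries of row $\ell$ lying in $\{1,?\}$ form a single $1\times|J|$ rectangle $R_\ell$ of $\bmat{Y}$, so appending $R_\ell$ to any minimum cover of $\bmat{Y}'$ yields a cover of $\bmat{Y}$. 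Therefore $br(\bmat{Y})\le br(\bmat{Y}')+1$, i.e.\ $br(\bmat{Y}')\ge br(\bmat{Y})-1$.

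Chaining these relations gives $br(\bmat{Y})-1\le br(\bmat{Y}')=i(\bmat{Y}')\le i(\bmat{Y})$, and combined with $i(\bmat{Y})\le br(\bmat{Y})-1$ every inequality is tight, which proves the lemma. The only point that needs genuine care is the rectangle claim for a row: one must check that all the $1$s of row $\ell$ (the $?$s being optional and harmless) are covered by the single rectangle $R_\ell$, and that a cover of $\bmat{Y}'$ already covers every $1$ of $\bmat{Y}$ outside row $\ell$; both follow directly from the definition of a rectangle of a generalised binary matrix and of $\bmat{Y}'$ as a row deletion. I expect no real obstacle here, since the statement is essentially the observation that deleting a row can lower the Boolean rank by at most one. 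Lemma \ref{lemma_mnf_at_least_two_1s} is needed only to guarantee that $\bmat{Y}$ is nonempty, so that such a proper submatrix exists; by symmetry one could equally delete a column.
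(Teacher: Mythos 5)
Your proof is correct and takes essentially the same route as the paper: both delete a single row (or column), invoke firmness of the resulting proper submatrix $\bmat{Y}'$, and combine $i(\bmat{Y}')\le i(\bmat{Y})$ with $br(\bmat{Y}')\ge br(\bmat{Y})-1$ (the deleted row's nonzero entries form one rectangle) to chain $br(\bmat{Y})-1\le br(\bmat{Y}')=i(\bmat{Y}')\le i(\bmat{Y})\le br(\bmat{Y})-1$. The only cosmetic difference is that the paper states the intermediate conclusion $br(\bmat{Y}')=br(\bmat{Y})-1$ explicitly, whereas you close the inequality chain directly.
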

\begin{proof} Let $\bmat{Y}$ be mnf. By deleting a single row or column of $\bmat{Y}$ we get a submatrix $\bmat{Y}'$ which by definition is firm and  satisfies $i(\bmat{Y})-1 \le i(\bmat{Y}') \le i(\bmat{Y})$ and $br(\bmat{Y})-1 \le br(\bmat{Y}')\le br(\bmat{Y})$
	as a row or column forms a rectangle and may contain at most one element of an isolated set.
	 So, we must have $br(\bmat{Y}')=br(\bmat{Y})-1$ as otherwise $\bmat{Y}$ is firm. 
	 But then $i(\bmat{Y}')=br(\bmat{Y}')=br(\bmat{Y})-1 \le i(\bmat{Y})$
	which together with $i(\bmat{Y})<br(\bmat{Y})$ implies $br(\bmat{Y})-1 = i(\bmat{Y})$. \hfill $\square$
\end{proof}


By Theorem \ref{theorem_no_odd_holes_then_superfirm}, $\bmat{X}$ is superfirm if $\GG(\bmat{X})$ has no odd holes, so for $\bmat{X}$ to be mnf $\GG(\bmat{X})$ must contain odd holes. 
Using Theorem \ref{theorem_5hole_iff_submatrix} one can show that the smallest mnf standard binary matrices are of dimension $4\times 4$ and there are exactly two of them: $\overline{\bmat{I}}_4$ and $\overline{\bmat{I}}_4'$, where $\overline{\bmat{I}}_4'$ is obtained from $\overline{\bmat{I}}_4$ by turning a single $1$ to a $0$ (for instance at $(1,4)$, but due to symmetry any other $1$ would work). 

Let $\bmat{X}$ be a standard binary matrix with an odd hole $C$ in $\GG(\bmat{X})$ of size $|C|=2k+1$. Stretching all $1$s at $Q=\supp(\bmat{X})\setminus C$ of $\bmat{X}$, by Lemma \ref{lemma_removing_simplicial_rectangle} we get
\begin{equation}
i(\SSS^{Q}(\bmat{X})) -|Q|= i(\bmat{X}^Q) = k <k+1=br(\bmat{X}^Q)= br(\SSS^{Q}(\bmat{X})) -|Q|, 
\end{equation}
so $\SSS^Q(\bmat{X})$ is  non-firm. This recipe however, does not guarantee that $\SSS^Q(\bmat{X})$ is \emph{minimally} non-firm.
By adding extra conditions on $Q$, minimality can be enforced.
\begin{theorem}
	\label{theorem_mnnf_from_mnnf_generalised} Let $\bmat{X}\in\B^{m\times n}$.
	If $\bmat{X}^Q$ is a minimally non-firm generalised binary matrix for some non-empty $Q\subset \supp(\bmat{X})$ and $\bmat{X}^P$ is firm for all $P\subsetneq Q$, 
	then 
	$\mathcal{S}^Q(\bmat{X})\in \B^{(m+|Q|)\times(n+|Q|)}$ 
	is minimally non-firm.
\end{theorem}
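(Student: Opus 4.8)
The plan is to prove non-firmness and minimality separately, the former by a direct application of simplicial removal and the latter by a ``peeling'' argument that reduces every proper submatrix of $\SSS^Q(\bmat{X})$ to a submatrix of some $\bmat{X}^P$ with the hypotheses controlling firmness.

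First I would record that $\SSS^Q(\bmat{X})$ is a genuine standard binary matrix and compute its invariants. In the block form \eqref{eq_stretching_convention}, each diagonal $1$ of $\bmat{I}_{|Q|}$ is a simplicial $1$ whose unique maximal rectangle meets the $\bmat{X}$-block only in the corresponding element of $Q$; since $\bmat{I}_{|Q|}$ is diagonal, the stretches do not interfere, so removing these $|Q|$ simplicial $1$s one at a time (each time invoking Lemma~\ref{lemma_removing_simplicial_rectangle}) turns every $1$ of $Q$ into a $?$ and leaves exactly $\bmat{X}^Q$. This generalises \eqref{eq_stretching_iso_br} to $i(\SSS^Q(\bmat{X}))=i(\bmat{X}^Q)+|Q|$ and $br(\SSS^Q(\bmat{X}))=br(\bmat{X}^Q)+|Q|$. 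As $\bmat{X}^Q$ is mnf we have $i(\bmat{X}^Q)<br(\bmat{X}^Q)$, hence $i(\SSS^Q(\bmat{X}))<br(\SSS^Q(\bmat{X}))$ and $\SSS^Q(\bmat{X})$ is non-firm.

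For minimality I would first reduce the goal: since firmness is hereditary and every submatrix of a proper submatrix of $\SSS^Q(\bmat{X})$ is again a proper submatrix, ``all proper submatrices are firm'' is equivalent to ``every proper submatrix $\bmat{M}$ satisfies $i(\bmat{M})=br(\bmat{M})$''. Given such an $\bmat{M}$, on row set $R_0\cup R_1$ and column set $C_0\cup C_1$ (with $R_0\subseteq[m]$, $C_0\subseteq[n]$ original and $R_1,C_1$ retained stretch lines), I would peel the stretches one by one. For $q_t=(r_t,c_t)\in Q$ with stretch row $\rho_t$ and column $\gamma_t$: if both $\rho_t,\gamma_t$ are present, then $(\rho_t,\gamma_t)$ is a simplicial $1$ of $\bmat{M}$ (its simpliciality survives earlier peels because the $?$s introduced lie at $\bmat{X}$-block positions $(r_s,c_s)$, which are not in row $\rho_t$ or column $\gamma_t$), and removing it deletes $\rho_t,\gamma_t$ while turning $q_t$ into a $?$; otherwise $\bmat{M}$ keeps at most one of $\rho_t,\gamma_t$, which either carries a single non-zero (a simplicial $1$ whose removal deletes the original line $c_t$ or $r_t$) or is an all-zero line that may be dropped, and $q_t$ stays a $1$. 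Each such step preserves $i-br$ (simplicial removals drop both by one, zero-line drops change neither). Letting $P\subseteq Q$ collect the indices handled by the first case, the process ends at a matrix $\bmat{M}^\ast$ that is a submatrix of $\bmat{X}^P$, with $i(\bmat{M})=br(\bmat{M})$ iff $i(\bmat{M}^\ast)=br(\bmat{M}^\ast)$.

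The hard part will be the case $P=Q$, which is exactly where non-firmness lives and where I must use properness of $\bmat{M}$ rather than firmness of $\bmat{X}^P$. When $P\subsetneq Q$ the hypothesis gives that $\bmat{X}^P$ is firm, so $\bmat{M}^\ast$ has $i=br$ and we are done. When $P=Q$, every stretch was fully present, so $\bmat{M}$ contains all stretch lines; being a proper submatrix it must then omit some original row or column, and since the first-case peels delete only stretch lines, no original line was removed, so $\bmat{M}^\ast$ is a \emph{proper} submatrix of $\bmat{X}^Q$. Because $\bmat{X}^Q$ is minimally non-firm, its proper submatrices satisfy $i=br$, giving $i(\bmat{M}^\ast)=br(\bmat{M}^\ast)$ in this case too. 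Thus every proper submatrix of $\SSS^Q(\bmat{X})$ has equal isolation and Boolean rank, which together with the non-firmness from the first paragraph shows $\SSS^Q(\bmat{X})$ is minimally non-firm. The main obstacle is the careful bookkeeping that identifies precisely which $q_t$ become $?$s and verifies that properness of $\bmat{M}$ is inherited by $\bmat{M}^\ast$ exactly in the $P=Q$ case.
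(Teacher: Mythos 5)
Your proof is correct and follows essentially the same route as the paper: it computes $i$ and $br$ of $\mathcal{S}^Q(\bmat{X})$ by peeling the simplicial $1$s of the $\bmat{I}_{|Q|}$ block to land on $\bmat{X}^Q$, and then reduces each proper submatrix to a submatrix of some $\bmat{X}^P$ and splits on $P=Q$ (properness forces a missing original line, so one lands in a proper submatrix of the mnf matrix $\bmat{X}^Q$) versus $P\subsetneq Q$ (where $\bmat{X}^P$ is firm by hypothesis). The only deviation is in the minimality step, where the paper passes to a smallest non-firm proper submatrix and uses Lemma~\ref{lemma_mnf_at_least_two_1s} to force stretch rows and columns to occur in pairs, whereas you peel an arbitrary proper submatrix directly and dispose of unpaired stretch lines as single-nonzero simplicial $1$s or zero lines --- a legitimate, slightly more case-heavy alternative.
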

	\begin{proof}
	$\mathcal{S}^Q(\bmat{X})$ may be written as a block matrix with four blocks $\bmat{X}, \bmat{L},\bmat{U}$ and $\bmat{I}_{|Q|}$ as in Equation \eqref{eq_stretching_convention}.
	By construction all $1$s in block $\bmat{I}_{|Q|}$ are simplicial, hence removing them 
	we obtain the mnf generalised binary matrix  $\bmat{X}^{Q}$.
	By Lemma \ref{lemma_removing_simplicial_rectangle} then  $i(\mathcal{S}^Q(\bmat{X}))=i(\bmat{X}^Q)+|Q|< br(\bmat{X}^Q)+|Q|=br(\mathcal{S}^Q(\bmat{X}))$.

	Suppose that not all proper submatrices of $\mathcal{S}^Q(\bmat{X})$ are firm and let $\bmat{Y}$ be the smallest non-firm proper submatrix indexed by $I\times J$.
	Then $\bmat{Y}$ is mnf.
	Note that the four block matrices of $\mathcal{S}^Q(\bmat{X})$ are all firm: (1) $\bmat{X}$ is firm as it is just $\bmat{X}^\emptyset$. (2) $\bmat{I}_{|Q|}$ is clearly firm. (3) $\bmat{U}$ has exactly one $1$ per column, so it can be obtained from an identity matrix by duplicating columns and adding zero rows, and thus firm. (4) Similarly, as $\bmat{L}$ has exactly one $1$ per row, it is firm.
	Hence $\bmat{Y}$ cannot be fully contained in any of the four blocks.
	As $\bmat{Y}$ is a mnf standard binary matrix it has at least two $1$s in each row and column by Lemma \ref{lemma_mnf_at_least_two_1s}.
	Since block $\left[ \begin{smallmatrix}
	\bmat{L} & \bmat{I}_{|Q|}
	\end{smallmatrix}\right]$ 
	has exactly two $1$s in each row, 
	if $\bmat{Y}$ has a row from this block, then $\bmat{Y}$ must also contain the columns of both $1$s in this row.  
	Similarly, if $\bmat{Y}$ contains a column from block $\left[ \begin{smallmatrix}
	\bmat{U} \\ \bmat{I}_{|Q|}
	\end{smallmatrix}\right]$, it must contain the rows of both $1$s in this column.
	Therefore, the rows in $I$ from block $\left[ \begin{smallmatrix}
	\bmat{L} & \bmat{I}_{|Q|}
	\end{smallmatrix}\right]$ and the columns in $J$ from block $\left[ \begin{smallmatrix}
	\bmat{U} \\ \bmat{I}_{|Q|}
	\end{smallmatrix}\right]$ come in pairs and may be identified with their $1$ in block $\bmat{I}_{|Q|}$.
	Let $P$ be the subset of $Q$ whose stretching created the $1$s in block $\bmat{I}_{|Q|}$ which are in $\bmat{Y}$.
	Removing all $|P|$ simplicial $1$s present in $\bmat{Y}$ from block $\bmat{I}_{|Q|}$ we obtain a generalised binary matrix which is fully contained in block $\bmat{X}$ and is just a submatrix $\bmat{Z}$ of $\bmat{X}^{P}$. 
	By Lemma \ref{lemma_removing_simplicial_rectangle}, $\bmat{Z}$ satisfies $i(\bmat{Z}) + |P| = i(\bmat{Y})$ and $br(\bmat{Z}) + |P|=br(\bmat{Y})$.
	If $P=Q$, then $I$ contains all the rows and columns from block $\bmat{I}_{|Q|}$ so $\bmat{Z}$ must be a proper submatrix of $\bmat{X}^Q$, hence firm.
	If $P\not = Q$, then $\bmat{Z}$ is a submatrix of the firm matrix $\bmat{X}^P$.
	In both cases $i(\bmat{Z})=br(\bmat{Z})$ which implies $i(\bmat{Y})=br(\bmat{Y})$, a contradiction.\hfill $\square$
\end{proof}

One can see that a partial converse of the above theorem also holds, i.e. if a standard binary mnf matrix has some simplicial $1$s then by removing those we obtain a generalised binary mnf  matrix for which the theorem's conditions hold. Note however, that not all mnf matrices have simplicial $1$s, e.g. $\overline{\bmat{I}}_4$, hence certainly not all mnf matrices arise via Theorem \ref{theorem_mnnf_from_mnnf_generalised}.

Recall  $\bmat{C}_n$ is the $n\times n$ cycle matrix.
For $n\ge 3$, let $\bmat{M}_{n+1}:=\SSS^{(n,n)}(\bmat{C}_n)$ be the $(n+1)\times (n+1)$ matrix and $\bmat{H}_n:=[\bmat{1}, \bmat{C}_n]$ be the $n\times (n+1)$ matrix,
 \begin{align}
 \label{eq_matrices}
 \bmat{M}_n 
 &= 
 \begin{bsmallmatrix}
 \vrule width 0pt height 6pt 
 1  & 1 &      \\
 & 1 & 1   & &   \\
 & &  \ddots&\ddots  &\\
 & &  & 1&  1 \\
 1&&   &   & 1 &1\\ 
 &&   &   & 1 &1
 \vrule width 0pt depth 3.2pt
 \end{bsmallmatrix}, 
&
 \bmat{H}_n 
& =
  \begin{bsmallmatrix}
 \vrule width 0pt height 6pt 
 1 &1  & 1 &      \\
1 & & 1 & 1   & &   \\
 \vdots& & &  \ddots&\ddots  &\\
 1&& &  & 1&  1 \\
 1&1&&   &   & 1 
 \vrule width 0pt depth 3.2pt
 \end{bsmallmatrix}.
 \end{align}
 Matrices $\bmat{M}_n$ appear in the work of Lubiw \cite{Lubiw:1990} as forbidden submatrices for a subset of superfirm matrices that can be decomposed into linear matrices by applying the matrix equivalent of split decomposition \cite{Cunningham:1980} on bipartite graphs. 
 
 Recall matrix $\bmat{D}_4$ from Figure \ref{figure_D4} and for $n\ge 5$, let $\bmat{D}_n := \mathcal{S}^{(3,n-1)}(\bmat{D}_{n-1})$. In addition, let $\bmat{T}_5\in\B^{5\times 5}$ as below and for $n\ge 6$ define $\bmat{T}_n:=\SSS^{(4,n-1)} (\bmat{T}_{n-1})$,
 \begin{align}
 \bmat{D}_n=
 \begin{bsmallmatrix}
  \vrule width 0pt height 6pt 
 & 1 & 1 &   \\
 1 & 1 & 1  &   \\
 1& 1 & 1 & 1 &\dots & 1 \\
&&1  & 1 &      \\
&&&   \ddots&\ddots  \\
&&&  & 1&  1 
 \vrule width 0pt depth 3.2pt
 \end{bsmallmatrix},
&&
   \bmat{T}_5=
 \begin{bsmallmatrix}
   \vrule width 0pt height 6pt 
 &1&  & 1 &   \\
 1& & 1 &   &   \\
 & 1& 1 & 1  &   \\
 1&& 1 & 1 & 1\\
 &&&1&1
  \vrule width 0pt depth 3.2pt
 \end{bsmallmatrix},
&&
  \bmat{T}_n=
 \begin{bsmallmatrix}
    \vrule width 0pt height 4pt 
 &1&  & 1 &   \\
 1& & 1 &   &   \\
  & 1& 1 & 1  &   \\
 1&& 1 & 1 & 1 &\dots & 1 \\
 &&&1  & 1 &      \\
 &&&&   \ddots&\ddots  &\\
 &&&&  & 1&  1 
  \vrule width 0pt depth 2.pt
 \end{bsmallmatrix}.
 \end{align}
 All these matrices contain odd holes in their rectangle cover graph as shown in Figure \ref{figure_H3} for $\bmat{H}_3$ and Figure \ref{figure_mixed} for $\bmat{M}_4,\bmat{D}_5$ and $\bmat{T}_6$.
In the remaining parts of this section, we will prove that by choosing the set $Q$ in Theorem \ref{theorem_mnnf_from_mnnf_generalised} to be $\{(n,n)\}$ for $\bmat{M}_n$, $G_n=\{(n,2),(n,n+1)  \}$ for $\bmat{H}_n$, $Q_n=\{ (1,2),(2,1),(n,n) \}$ for $\bmat{D}_n$ and $\bmat{T}_n$, the conditions of Theorem \ref{theorem_mnnf_from_mnnf_generalised} are satisfied and thus we get our main theorem.
 \begin{theorem}
 	\label{theorem_four_classes_of_mnf_matrices}
 	For $n\ge 4$, $\SSS^{(n,n)}(\bmat{M}_n)$, $\SSS^{G_{n-1}}(\bmat{H}_{n-1})$, $\SSS^{Q_n}({\bmat{D}}_n)$ and $\SSS^{Q_{n+1}}({\bmat{T}}_{n+1})$ are mnf standard binary matrices. In addition, $\SSS^{Q_n}({\bmat{D}}_n)$ and $\SSS^{Q_{n+1}}({\bmat{T}}_{n+1})$ are totally balanced.
 \end{theorem}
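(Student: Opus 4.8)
The plan is to apply Theorem \ref{theorem_mnnf_from_mnnf_generalised} once for each family, with the base matrix $\bmat{X}$ and stretch set $Q$ prescribed in the paragraph preceding the theorem: the pairs $(\bmat{X},Q)$ are $(\bmat{M}_n,\{(n,n)\})$, $(\bmat{H}_{n-1},G_{n-1})$, $(\bmat{D}_n,Q_n)$ and $(\bmat{T}_{n+1},Q_{n+1})$. For each pair I must verify the two hypotheses of that theorem, namely that $\bmat{X}^Q$ is a minimally non-firm generalised binary matrix and that $\bmat{X}^P$ is firm for every $P\subsetneq Q$; the conclusion that $\SSS^Q(\bmat{X})$ is mnf is then immediate. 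The total balance of the $\bmat{D}$ and $\bmat{T}$ families is a separate issue handled at the end via Lemma \ref{lemma_stretching_preserves_TB}.

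First I would establish non-firmness of $\bmat{X}^Q$ by exhibiting an odd hole in $\GG(\bmat{X}^Q)$ and computing $\alpha$ and $\theta$ of the whole graph. The design principle behind $Q$ is that turning its $1$s into $?$s deletes their vertices from $\GG(\bmat{X})$ while leaving every spanning rectangle intact, since a $?$ is never a $0$; this is exactly what exposes a hole that was previously chorded or clique-covered away. For instance, in $\bmat{M}_n^{(n,n)}$ the even $2(n-1)$-cycle arising from the $\bmat{C}_{n-1}$ block is rerouted through the corner vertices $(n,n-1)$ and $(n-1,n)$, which remain adjacent because the bottom-right $2\times2$ block now reads $1,1,1,?$ and so is still a rectangle; this lengthens the cycle by one into a chordless $(2n-1)$-hole omitting only $(n-1,n-1)$. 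One then checks $i(\bmat{X}^Q)=k$ and $br(\bmat{X}^Q)=k+1$, the lone extra vertex being absorbable into a triangle of the clique cover but never enlarging an isolated set. Locating the analogous holes for the $\bmat{H}$, $\bmat{D}$ and $\bmat{T}$ families and computing $i,br$ with Lemma \ref{lemma_removing_simplicial_rectangle} is then routine.

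Second I would prove minimality of $\bmat{X}^Q$ together with the firmness of $\bmat{X}^P$ for $P\subsetneq Q$. Both reduce to showing that no odd hole is \emph{realizable by a submatrix}, so that $\alpha=\theta$ on every submatrix; here Theorem \ref{theorem_no_odd_holes_then_superfirm} upgrades "no odd holes" to perfection, and Theorem \ref{theorem_5hole_iff_submatrix} rules out the small holes through the forbidden submatrices $\bmat{D}_4,\bmat{H}_3,\bmat{H}_3^\top,\bmat{K}_5$. For minimality the structural fact to establish is that the hole above is essentially the unique odd hole of $\GG(\bmat{X}^Q)$ and that every row and every column of $\bmat{X}^Q$ meets it, so that deleting any single row or column destroys it and creates no new one, leaving a superfirm (hence firm) proper submatrix. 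Condition $(b)$ is the dual statement: in $\bmat{X}^P$ the vertices of $Q\setminus P$ are back as $1$s, so any submatrix capturing the would-be hole is forced to contain one of them as an extra $1$ that restores $\alpha=\theta$; thus $\GG(\bmat{X}^P)$ still contains the hole as an induced subgraph but never as a submatrix, and $\bmat{X}^P$ stays firm. For the $\bmat{M}$ family, where $P=\emptyset$ is the only case, this is even shorter: $\bmat{M}_n=\SSS^{(n-1,n-1)}(\bmat{C}_{n-1})$ is a stretch of the superfirm matrix $\bmat{C}_{n-1}$, so it is firm by the stretching lemma of Section \ref{section_simplicial_and_stretching}.

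Because the families are infinite, these verifications cannot be finite case checks and must be carried out uniformly in $n$, by induction along the recursive definitions $\bmat{M}_{n+1}=\SSS^{(n,n)}(\bmat{C}_n)$, $\bmat{D}_n=\SSS^{(3,n-1)}(\bmat{D}_{n-1})$ and $\bmat{T}_n=\SSS^{(4,n-1)}(\bmat{T}_{n-1})$, feeding the firmness- and superfirmness-preservation of the (simplicial) stretching lemma into the inductive step. I expect this uniform control of \emph{all} odd holes to be the main obstacle: pinpointing the single hole that makes $\bmat{X}^Q$ non-firm is easy, but proving that no other odd hole survives the deletion of an arbitrary row or column, and that in $\bmat{X}^P$ the hole is never submatrix-realizable, is where the structural work lies. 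Finally, total balance of $\SSS^{Q_n}(\bmat{D}_n)$ and $\SSS^{Q_{n+1}}(\bmat{T}_{n+1})$ follows by induction: the base cases $\bmat{D}_4$ and $\bmat{T}_5$ are totally balanced by inspection (equivalently, they admit a $\mvec{\Gamma}$-free ordering), and Lemma \ref{lemma_stretching_preserves_TB} shows that every stretch preserves this property, both along the recursive construction of $\bmat{D}_n,\bmat{T}_n$ and for the final stretch by $Q$.
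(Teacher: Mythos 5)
Your overall strategy matches the paper's: invoke Theorem \ref{theorem_mnnf_from_mnnf_generalised} for each family with the prescribed stretch set, verify its two hypotheses, and get total balancedness from Lemma \ref{lemma_stretching_preserves_TB} applied to the base cases $\bmat{D}_4$ and $\bmat{T}_5$. Your treatment of $\bmat{M}_n$ and $\bmat{H}_n$ is also essentially the paper's: for these two families the only odd holes in the rectangle cover graph are the long holes meeting every row and column, so every proper submatrix (and more generally every $\GG(\bmat{Y})$ for $\bmat{Y}$ a proper submatrix of $\bmat{X}^P$) is perfect, and firmness follows from superfirmness; the observation that $\bmat{M}_{n+1}=\SSS^{(n,n)}(\bmat{C}_n)$ is a stretch of a linear, hence superfirm, matrix is a legitimate shortcut for the firmness of $\bmat{M}_n$ itself.

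The genuine gap is in your plan for $\bmat{D}_n$ and $\bmat{T}_n$. You propose to prove minimality of $\bmat{X}^{Q}$ and firmness of $\bmat{X}^{P}$ by showing that the relevant odd hole meets every row and column, "so that deleting any single row or column destroys it and creates no new one, leaving a superfirm (hence firm) proper submatrix." That reduction fails for these two families: since $\bmat{D}_n=\SSS^{(3,n-1)}(\bmat{D}_{n-1})$, the matrix $\bmat{D}_n$ for $n\ge 5$ contains $\bmat{D}_4$ as a proper submatrix, and $\GG(\bmat{D}_4)$ contains a $5$-hole, so $\bmat{D}_n$ has proper submatrices that are \emph{not} superfirm (they are firm for other reasons); the same holds for $\bmat{T}_n$, $n\ge 6$. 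The paper explicitly flags this and replaces the superfirmness argument by an induction on $n$: one takes a smallest non-firm proper submatrix $\bmat{Y}$ indexed by $I\times J$, forces $n\in I$ or $n\in J$ (else the induction hypothesis applies), uses Lemma \ref{lemma_mnf_at_least_two_1s} to propagate which rows and columns must be present, and then in each remaining case either exhibits an explicit isolated set and rectangle cover of equal size, or peels off a chain of simplicial $1$s via Lemma \ref{lemma_removing_simplicial_rectangle} to reduce $\bmat{Y}$ to a smaller firm submatrix. Your proposal anticipates that "uniform control of all odd holes" is the obstacle, but the mechanism you offer for overcoming it (perfection of all proper pieces) cannot work here, and the dual claim for $\bmat{X}^P$ (that the restored $1$s of $Q\setminus P$ automatically give $\alpha=\theta$) likewise needs the explicit constructions of isolated sets of size $n-1$ that the paper carries out; as written these steps are not established.
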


The claim of total balancedness is immediate by Lemma \ref{lemma_stretching_preserves_TB} as $\bmat{D}_4$ is an interval matrix and $\bmat{T}_5$ is $\mvec{\Gamma}$-free \cite{Lubiw:1987:Doubly}.
Lubiw observed that  $\SSS^{Q_4}(\bmat{D}_4)$ and $\SSS^{Q_5}(\bmat{D}_5)$ are non-firm \cite{Lubiw:1990}. 
Her observation served as a motivation to us to define the stretching operation and matrices $\bmat{D}_n$. 
\begin{figure}[tb!]
	\centering
	\begin{subfigure}[b]{0.25\textwidth}
		\centering
		\includegraphics[scale=0.7]{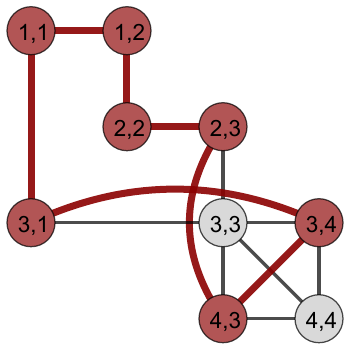}
		\caption{\label{figure_M4}$\GG(\bmat{M}_4)$}
	\end{subfigure}
		\hfill
	\begin{subfigure}[b]{0.3\textwidth}
		\centering
		\includegraphics[scale=0.75]{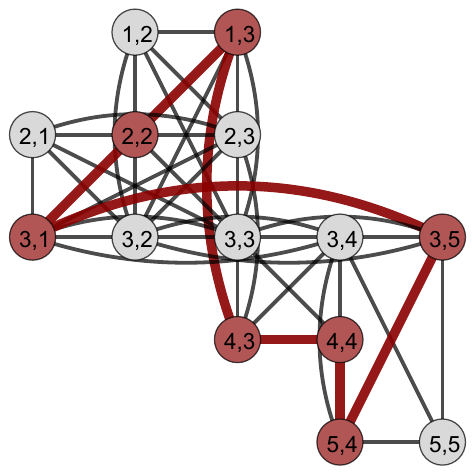}
		\caption{\label{figure_D5}$\GG(\bmat{D}_5)$}
	\end{subfigure}	
		\hfill
	\begin{subfigure}[b]{0.35\textwidth}
	\centering
	\includegraphics[scale=0.78]{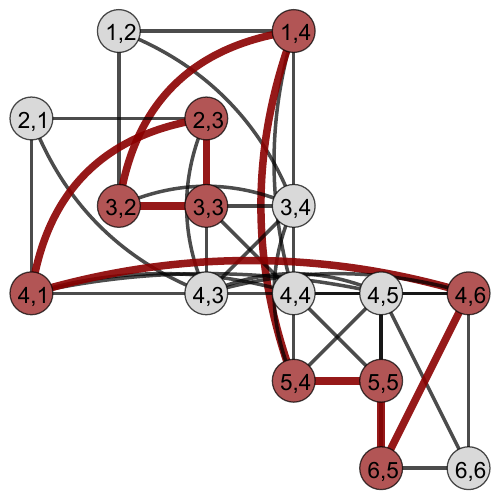}
	\caption{\label{figure_T6}$\GG(\bmat{T}_6)$}
\end{subfigure}
	\caption{\label{figure_mixed} Odd holes highlighted  in the rectangle cover graphs of $\bmat{M}_4$, $\bmat{D}_5$ and $\bmat{T}_6$}
\end{figure}
 
For the first two classes, $\bmat{M}_n$ and $\bmat{H}_n$, the proofs that Theorem \ref{theorem_mnnf_from_mnnf_generalised}'s conditions hold are similar because both  are minimally non-superfirm. A standard binary matrix is \emph{minimally non-superfirm (mnsf)} if  it is not superfirm but all proper submatrices of it are. Next, we show that the conditions hold for the class $\bmat{H}_n$.


%
	\begin{lemma}
	\label{lemma_1Cn_firmness_results}
	For $n\ge 3$,  $\bmat{H}_n^P$ is firm  for all $P\subsetneq G_n=\{(n,2),(n,n+1) \}$ and $\bmat{H}_n^{G_n}$ is a mnf generalised binary matrix. In addition, $\bmat{H}_n$ is mnsf.
	\end{lemma}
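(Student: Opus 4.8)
The plan is to determine the rectangle cover graph $\GG(\bmat{H}_n)$ exactly, classify all of its odd holes, and then obtain every assertion from Theorem~\ref{theorem_no_odd_holes_then_superfirm} together with a single ``fill in the $?$'s'' reduction.

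First I would set up $\GG(\bmat{H}_n)$. Write $c_i=(i,1)$ for the entries of the all-ones first column and call the $2n$ support entries of $\bmat{C}_n$ the \emph{cyclic} vertices. I expect to check three facts: the $c_i$ form a clique; the cyclic vertices induce a single $2n$-hole (consecutive rows and columns of $\bmat{C}_n$ give all adjacencies, and one verifies there are no chords); and each $c_i$ is adjacent precisely to the two cyclic entries in row $i$ together with the two cyclic entries sharing a column with row $i$, i.e. to four consecutive vertices of the $2n$-hole. The structural heart is then the claim that the odd holes of $\GG(\bmat{H}_n)$ are exactly the $n$ sets $O_i=\{c_i\}\cup\{\,\text{cyclic entries not in row } i\,\}$, each an induced $(2n-1)$-hole (the $2n$-hole with row $i$'s two adjacent cyclic entries deleted, closed up through $c_i$). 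I would prove this by noting a hole meets the clique $\{c_1,\dots,c_n\}$ in at most one vertex; a hole avoiding all $c_i$ lies in the even $2n$-hole and cannot be odd; and a hole through a single $c_i$ must attach $c_i$ to its two extreme cyclic neighbours, the other two being chords, which forces it to be $O_i$.

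Part (3) is then immediate: $O_n$ shows $\bmat{H}_n$ is not superfirm, while deleting any line destroys every $O_i$ --- deleting column $1$ removes all $c_i$; deleting row $r$ removes $c_r$ and the two cyclic entries of row $r$, which lie on every $O_i$ with $i\neq r$; deleting a cyclic column removes two cyclic entries in distinct rows, at least one of which lies on each $O_i$. Hence every proper submatrix of $\bmat{H}_n$ has no odd hole and is superfirm by Theorem~\ref{theorem_no_odd_holes_then_superfirm}, so $\bmat{H}_n$ is minimally non-superfirm. Parts (1) and (2) rest on one reduction: any \emph{proper} submatrix $\bmat{Y}'$ of some $\bmat{H}_n^{P}$ ($P\subseteq G_n$) omits a line, so filling its $?$'s back to $1$'s gives a proper submatrix of $\bmat{H}_n$, which is superfirm, hence has a perfect rectangle cover graph; as $\GG(\bmat{Y}')$ is an induced subgraph of that perfect graph, $i(\bmat{Y}')=br(\bmat{Y}')$. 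Thus all proper submatrices of every $\bmat{H}_n^{P}$ are firm, and only the full matrices remain. For $P=\emptyset$ or $P=\{(n,2)\}$ (and symmetrically $\{(n,n+1)\}$) I would check $i(\bmat{H}_n^{P})=br(\bmat{H}_n^{P})=n$: the $n$ rectangles covering the columns of $\bmat{C}_n$ give $br\le n$, the surviving $O_n$ forces $br\ge n$, and the colour class of the $2n$-hole avoiding the $?$'d entries is an isolated set of size $n$; so each such $\bmat{H}_n^{P}$ is firm. For $P=G_n$, both deleted vertices lie on every $O_i$ with $i\neq n$, so only $O_n$ survives, $br=n$ as before, but now $i=n-1$, giving $i(\bmat{H}_n^{G_n})<br(\bmat{H}_n^{G_n})$ and thus minimal non-firmness.

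The step I expect to be the main obstacle is, in both its appearances, about the isolation number rather than $br$. Establishing that the $O_i$ are the \emph{only} odd holes --- ruling out holes that weave between several $c_i$ and the $2n$-hole --- is the delicate structural point. The other is the bound $i(\bmat{H}_n^{G_n})\le n-1$: here an isolated set using no $c_i$ lives in the path $P_{2n-2}$ obtained by deleting the two adjacent row-$n$ cyclic vertices, so has size at most $n-1$, while one using some $c_i$ must avoid the three or four consecutive path vertices adjacent to $c_i$, again capping the size at $n-1$; a short case analysis closes this. The remaining checks ($br\le n$ via the column rectangles and the existence of isolated sets of size $n$ and $n-1$) are routine.
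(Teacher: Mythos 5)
Your overall architecture is the same as the paper's: classify the odd holes of $\GG(\bmat{H}_n)$, observe that each one meets every row and every column so that proper submatrices of $\bmat{H}_n^P$ are handled by Theorem~\ref{theorem_no_odd_holes_then_superfirm} (equivalently, the Strong Perfect Graph Theorem after Lemma~\ref{lemma_no_C3_no_oddhole} excludes large antiholes), and then compute $i$ and $br$ of the full matrices $\bmat{H}_n^P$ directly. Your variants for the numerical part are fine and arguably cleaner than the paper's: the colour classes of the induced $2n$-hole on $\supp(\bmat{C}_n)$ do give isolated sets of size $n$ avoiding any single element of $G_n$, and your path-independence bound for $i(\bmat{H}_n^{G_n})\le n-1$ works (the paper instead counts rows and columns after forcing $(n,1)$ into a hypothetical isolated set of size $n$).

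There is, however, one step in your classification of odd holes that is false as stated: a hole need not meet the clique $\{c_1,\dots,c_n\}$ in at most one vertex. A hole may contain two clique vertices provided they are consecutive on it, and such holes genuinely occur here. For example, for $n=5$, writing the $2n$-hole as $u_1,\dots,u_{10}$ with $u_{2t-1},u_{2t}$ the two cyclic entries of row $t$, the cycle $c_1,c_3,u_7,u_8,u_9,u_{10},c_1$ is an induced $6$-hole containing both $c_1$ and $c_3$. So your case analysis, which only treats holes with zero or one vertex from column $1$, does not rule out odd holes weaving through two $c_i$'s. The gap is repairable exactly as in the paper: if a hole contains two vertices of column $1$, its remaining vertices form an arc of the $2n$-hole running between the $4$-vertex neighbourhood arcs $A_i$ and $A_j$ of $c_i$ and $c_j$; since each $A_t$ starts and ends at positions of fixed parity, the gap between two such arcs always has even length, so the hole contains an even number of cyclic vertices and is even. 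With that parity argument inserted, your conclusion that the $n$ cycles $O_i$ are the only odd holes -- and hence the rest of your proof -- goes through.
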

	\begin{proof}
		For $P\subseteq G_n$, at least $n$ rectangles are needed to cover $\bmat{H}_n^P$ as
		\begin{equation}
		\CC_n := \supp(\bmat{H}_n) 
		\setminus 
		(
		\{(i,1): i\in [n-1]  \}
		\cup 
		G_n
		),
		\end{equation}
		is a $2n-1$-hole in $\GG(\bmat{H}_n^P)$.
		As $\bmat{H}_n^P$ only has $n$ rows, $br(\bmat{H}_n^P)=n$.
		
		
		Note that submatrix $[1,n-1]\times [2,n+1]$ (where $[\ell,k]:=\{\ell,\ell+1,\dots,k\}$) has two isolated sets of size $n-1$.
		For $P\subsetneq G_n$, $(i,j)\in G_n\setminus P$ may be added to one of these two isolated sets to get an isolated set of size $n$ for $\bmat{H}_n^P$.
%
		%
		For $\bmat{H}_n^{G_n}$ however, none of the $1$s can be added to these two isolated sets, so we only have $i(\bmat{H}_n^{G_n})\ge n-1$.
		Suppose $\bmat{H}_n^{G_n}$ has an isolated set $T_n$ of size $n$. Then $T_n$ needs to contain a $1$ from each row, so $(n,1)\in T_n$. But then $T_n$ cannot contain $(1,2)$ and $(n-1,n+1)$, the only $1$s in columns $2$ and $n+1$, as they are in a rectangle with $(n,1)$. Hence $T_n$ has $n$ elements from $n-1$ distinct columns, which is impossible.
		
		$\GG(\bmat{H}_n)$ has no odd antiholes of size $7$ or more by Lemma \ref{lemma_no_C3_no_oddhole} but it contains $n$ $2n-1$-holes, one of which is $\CC_n$. Any other hole in $\GG(\bmat{H}_n)$ is either contained in the submatrix $\bmat{C}_n$ and hence it is the $2n$-hole, or contains at most two vertices from column $1$.  Note that if $(\ell,1)$ is a vertex of a hole then the hole cannot have another vertex from row $\ell$.
		If a hole contains a single vertex from column $1$ then  it is easy to see that it must be one of the $n$ $2n-1$-holes. If the hole has two vertices from column $1$, then it must contain an even number of vertices from submatrix $\bmat{C}_n$, so it is an even hole. Therefore, the $n$ $2n-1$-holes are the only odd holes in $\GG(\bmat{H}_n)$ which all have a vertex from every row and column.
		For $P\subseteq G_n$, $\GG(\bmat{Y})$ for any proper submatrix $\bmat{Y}$ of $\bmat{H}_n^P$ then has no odd holes and no odd antiholes, so $\GG(\bmat{Y})$ is perfect by the Strong Perfect Graph Theorem \cite{Chudnovsky:2006:SPGT}.$\hfill\square$
		%
	\end{proof}

Observe that $\GG(\bmat{M}_n)$ contains a single odd-hole of size $2n-1$ as shown in Figure \ref{figure_M4} for $m=4$. To prove that conditions of Theorem \ref{theorem_mnnf_from_mnnf_generalised} are satisfied by class $\bmat{M}_n$, the same structure of proof as for $\bmat{H}_n$ may be applied to get the following.
\begin{lemma}
	\label{lemma_stretchedCn_firmness_results}
	For $n\ge 4$, 
	$\bmat{M}_n$ is firm and mnsf, and $\bmat{M}_n^{(n,n)}$ is a mnf generalised binary matrix.
\end{lemma}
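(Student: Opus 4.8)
My plan is to mirror the proof of Lemma~\ref{lemma_1Cn_firmness_results}, the substantive work being a description of $\GG(\bmat{M}_n)$ precise enough to locate all of its odd holes. Label the $1$s of $\bmat{M}_n$ by $a_i=(i,i)$ and $b_i=(i,i+1)$ for $i\in\{1,\dots,n-2\}$, together with $p=(n-1,1)$, $q=(n-1,n-1)$, $r=(n-1,n)$, $s=(n,n-1)$ and $t=(n,n)$. Inspecting rows, columns and $2\times2$ all-$1$ blocks shows that $\GG(\bmat{M}_n)$ is an induced path $a_1-b_1-a_2-\cdots-a_{n-2}-b_{n-2}$, every vertex of which except $b_{n-2}$ has degree $2$ in $\GG(\bmat{M}_n)$, glued through the edge $a_1p$ and the edges $b_{n-2}q,\ b_{n-2}s$ to the cluster $\{p,q,r,s,t\}$; here $\{q,r,s,t\}$ induces the $K_4$ coming from the stretched block, $p$ is adjacent only to $a_1,q,r$, and $t$ is adjacent only to $q,r,s$ (see Figure~\ref{figure_M4}). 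I would first record the counts. As $\bmat{M}_n$ has $n$ rows and its $(2n-1)$-hole forces $\theta\ge n$, both $br(\bmat{M}_n)$ and $br(\bmat{M}_n^{(n,n)})$ equal $n$ (the hole avoids $t$, hence survives in $\GG(\bmat{M}_n^{(n,n)})=\GG(\bmat{M}_n)-t$). The set $\{b_1,\dots,b_{n-2},p,t\}$ is isolated, so $i(\bmat{M}_n)=n$; after deleting $t$, any isolated set of size $n$ would need one $1$ in each row, forcing column $n$ to be met by one of rows $1,\dots,n-2$, which is impossible since those rows miss column $n$, so $i(\bmat{M}_n^{(n,n)})=n-1$. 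Thus $i=br$ for $\bmat{M}_n$ while $\bmat{M}_n^{(n,n)}$ is non-firm.

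The crux, exactly as for $\bmat{H}_n$, is to prove that
$$O:=a_1-b_1-\cdots-b_{n-2}-s-r-p-a_1$$
is the \emph{unique} odd hole of $\GG(\bmat{M}_n)$ and that it meets every row and column. First, $t$ lies in no hole, since its neighbourhood $\{q,r,s\}$ is a clique and the two neighbours of $t$ on a hole would then be adjacent. Second, the degree-$2$ spine is rigid: if a hole $H$ contains $a_1$ then forced degree-$2$ neighbours pull the whole path and $p$ into $H$, and $H$ must leave $b_{n-2}$ through $q$ or $s$ and leave $p$ through $q$ or $r$; if instead $a_1\notin H$, the same cascade deletes $a_1,b_1,\dots,a_{n-2}$ and confines $H$ to $\{b_{n-2},p,q,r,s\}$, whose only cycles run through the vertex $q$ (adjacent to all four others here) and are hence chorded, so no hole avoids $a_1$. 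Third, the four ways of closing $H$ through the cluster settle the matter: the exits $(b_{n-2}\to q,\ p\to r)$ and $(b_{n-2}\to s,\ p\to q)$ create the chords $pq$ and $b_{n-2}q$ respectively, the exit $(b_{n-2}\to q,\ p\to q)$ closes the \emph{even} $(2n-2)$-cycle $p-a_1-\cdots-b_{n-2}-q-p$, and only $(b_{n-2}\to s,\ p\to r)$ yields a chordless cycle, namely $O$, of length $2n-1$. Reading off coordinates, $O$ uses a vertex in every row $1,\dots,n$ and every column $1,\dots,n$. I expect this third step, the finite bookkeeping that rules out the three bad exits, to be the only delicate point of the whole argument.

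The three claims then follow as in Lemma~\ref{lemma_1Cn_firmness_results}. Any proper submatrix of $\bmat{M}_n$ deletes a row or column and hence a vertex of $O$; as $O$ is the only odd hole, the induced rectangle cover graph has no odd hole, so the submatrix is superfirm by Theorem~\ref{theorem_no_odd_holes_then_superfirm}, and in particular firm. Together with $i(\bmat{M}_n)=br(\bmat{M}_n)$ this makes $\bmat{M}_n$ firm, and together with the presence of $O$ in $\GG(\bmat{M}_n)$ it makes $\bmat{M}_n$ minimally non-superfirm. Finally $\bmat{M}_n^{(n,n)}$ is non-firm by the counts above, while a proper submatrix $\bmat{M}_n^{(n,n)}[I\times J]$ has rectangle cover graph obtained from the perfect graph $\GG(\bmat{M}_n[I\times J])$ (perfect because $\bmat{M}_n[I\times J]$ is a proper, hence superfirm, submatrix of $\bmat{M}_n$) by possibly deleting the vertex $t$, so it is itself perfect and the submatrix is firm; hence $\bmat{M}_n^{(n,n)}$ is a minimally non-firm generalised binary matrix.
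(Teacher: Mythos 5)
Your proposal is correct and follows exactly the route the paper intends: the paper gives no separate proof for this lemma, stating only that ``the same structure of proof as for $\bmat{H}_n$ may be applied,'' and your argument is precisely that adaptation --- computing $i$ and $br$ for $\bmat{M}_n$ and $\bmat{M}_n^{(n,n)}$, and showing the $(2n-1)$-cycle through every row and column is the unique odd hole of $\GG(\bmat{M}_n)$ so that all proper submatrices are superfirm by Theorem~\ref{theorem_no_odd_holes_then_superfirm}. Your degree-two cascade along the spine and the four-exit case analysis at the $\{p,q,r,s,t\}$ cluster correctly supply the uniqueness details the paper leaves implicit.
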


 Although $\bmat{D}_4$ and $\bmat{T}_5$ are mnsf, for larger $n$ as both $\bmat{D}_n$ and $\bmat{T}_n$ are defined recursively, they have proper submatrices which are not superfirm. 
 Hence the argument used in the proof of the previous two classes does not work for $\bmat{D}_n$ and $\bmat{T}_n$.
 Next we prove that class $\bmat{D}_n$ satisfies the conditions of Theorem \ref{theorem_mnnf_from_mnnf_generalised}.
\begin{lemma}
	\label{lemma_DnP_firmness_results}
	For $n\ge 4$, $\bmat{D}_n^P$ is firm for all $P\subsetneq Q_n=\{(1,2),(2,1),(n,n)  \}$ and $\bmat{D}_n^{Q_n}$ is a mnf generalised binary matrix. In addition, $\bmat{D}_4$ is mnsf. 
\end{lemma}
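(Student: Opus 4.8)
The plan is to first fix the rectangle structure of $\bmat{D}_n$, then isolate the single long odd hole responsible for non-firmness, and finally attack minimality by induction on $n$ through the recursion $\bmat{D}_n=\SSS^{(3,n-1)}(\bmat{D}_{n-1})$. The maximal rectangles of $\bmat{D}_n$ are the two corner blocks $\{1,2,3\}\times\{2,3\}$ and $\{2,3\}\times\{1,2,3\}$, the spine row $3$, the third column, and the staircase blocks $\{3,i+1\}\times\{i,i+1\}$ for $i=3,\dots,n-1$. The two corner blocks together with the $n-3$ staircase blocks cover $\supp(\bmat{D}_n)$, so $br(\bmat{D}_n)\le n-1$, and since $\bmat{D}_n$ is an interval matrix Gy\H{o}ri's theorem \cite{Gyori:1984:Interval} gives $br(\bmat{D}_n)=i(\bmat{D}_n)=n-1$. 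I would also record the useful fact that all three cells of $Q_n=\{(1,2),(2,1),(n,n)\}$ are simplicial $1$s of $\bmat{D}_n$, with maximal rectangles the two corner blocks and $\{3,n\}\times\{n-1,n\}$ respectively.

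Next I would exhibit the induced cycle
\[
C^\ast:\ (1,3),(2,2),(3,1),(3,n),(n,n-1),(n-1,n-1),(n-1,n-2),\dots,(4,4),(4,3),
\]
which crosses the spine from $(3,1)$ to $(3,n)$, zigzags down the staircase, and returns up column $3$ to $(1,3)$; every candidate chord requires an entry the staircase forces to be $0$, so $C^\ast$ is a chordless odd hole of length $2n-3$ that meets every line of $\bmat{D}_n$ and avoids $Q_n$. Consequently $C^\ast$ persists in $\GG(\bmat{D}_n^{Q_n})$, giving $br(\bmat{D}_n^{Q_n})\ge\theta(C^\ast)=n-1$; the cover above still covers the remaining $1$s once the three cells become $?$, so $br(\bmat{D}_n^{Q_n})=n-1$ and $i(\bmat{D}_n^{Q_n})\ge\alpha(C^\ast)=n-2$. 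For the matching bound $i(\bmat{D}_n^{Q_n})\le n-2$ I would use that any two $1$s in a common line share a $1\times2$ or $2\times1$ rectangle, so an isolated set is a partial permutation; a set of size $n-1$ would miss exactly one column, and since columns $1$ and $n$ have their only surviving $1$s $(3,1),(3,n)$ in row $3$, that missed column is $1$ or $n$. Missing column $n$ forces $(3,1)$ into the set, hence row $2$ empty, leaving column $2$ (whose surviving $1$s $(2,2),(3,2)$ are now both blocked) unmet; missing column $1$ forces $(3,n)$ in, hence row $n$ empty, so rows $1,2$ are used and $(1,3),(2,2)$ are both forced although they are adjacent --- both cases are contradictory. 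Thus $i=n-2=br-1$ and $\bmat{D}_n^{Q_n}$ is non-firm, consistent with Lemma~\ref{lemma_mnf_diff_1}. Finally $\bmat{D}_4$ is mnsf by a finite check: it is non-superfirm by its $5$-hole, while each proper submatrix is too small for $\bmat{D}_4$ or $\bmat{K}_5$ and, by comparing row and column sums, contains no $\bmat{H}_3$ or $\bmat{H}_3^\top$, so by Theorem~\ref{theorem_5hole_iff_submatrix} it has no $5$-hole and hence, by Theorem~\ref{theorem_no_odd_holes_then_superfirm}, is superfirm.

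The hard part will be firmness of the generalised matrices. The argument used for $\bmat{H}_n$ and $\bmat{M}_n$ is unavailable here: for $n\ge5$ the matrix $\bmat{D}_n$ contains $\bmat{D}_4$, so deleting row $n$ from $\bmat{D}_n^{Q_n}$ leaves the $\bmat{D}_4$-block and its $5$-hole intact and the submatrix is not superfirm, so perfection cannot be invoked. Instead I would induct on $n$ using the simplicial $1$ at $(n,n)$ of $\bmat{D}_n$, whose maximal rectangle $\{3,n\}\times\{n-1,n\}$ meets $\bmat{D}_{n-1}$ only in $(3,n-1)$: peeling it with Lemma~\ref{lemma_removing_simplicial_rectangle} (and splitting $\GG$ at the associated clique cutset, as in the simplicial-stretching lemma) drops $i$ and $br$ together and reduces any submatrix that keeps this cell to a submatrix of $\bmat{D}_{n-1}$ with $(3,n-1)$ marked $?$, which the induction handles; submatrices deleting a staircase line break $C^\ast$ and are closed out by matching an isolated set to a cover of equal size. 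The firmness of the seven matrices $\bmat{D}_n^P$ with $P\subsetneq Q_n$ follows the same way, peeling whichever of the three simplicial cells is still a $1$. The main obstacle is to phrase the induction hypothesis broadly enough to be closed under this peel --- it must allow a spine cell such as $(3,m)$, not only the $Q_m$-cells, to carry a $?$ --- while tracking the two top $?$s at $(1,2)$ and $(2,1)$; assembling these cases cleanly, rather than any single step, is where the real work lies.
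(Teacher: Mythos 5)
Your Parts I--II match the paper's: the $(2n-3)$-hole you call $C^\ast$ is exactly the paper's $\CC_n$, the cover by the two corner blocks plus the staircase blocks gives $br=n-1$, and your two-case analysis (which of $(3,1)$, $(3,n)$ lies in a putative isolated set of size $n-1$) is essentially the paper's proof that $i(\bmat{D}_n^{Q_n})=n-2$. Two small repairs are needed even here. First, for $\emptyset\neq P\subsetneq Q_n$ you never exhibit an isolated set of size $n-1$ in $\bmat{D}_n^P$; Gy\H{o}ri's theorem only covers $P=\emptyset$, whereas the paper adds the surviving element of $Q_n\setminus P$ to an independent set of $\CC_n$ chosen to avoid its two neighbours on the hole. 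Second, for $\bmat{D}_4$, "no $5$-hole" does not by itself yield superfirmness via Theorem~\ref{theorem_no_odd_holes_then_superfirm}: you must also rule out longer odd holes, e.g.\ by noting that a hole has at most two vertices per row and per column, so a matrix with at most three rows or columns carries no $7$-hole. Easy, but it has to be said.

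The genuine gap is the part you yourself flag as "where the real work lies": firmness of every proper submatrix of $\bmat{D}_n^{Q_n}$, and of $\bmat{D}_n^P$ for $P\subsetneq Q_n$. Your plan --- peel the simplicial $1$ at $(n,n)$ and recurse --- does not close, for exactly the reason you identify: the peel turns $(3,n-1)$ into a $?$, producing a generalised matrix outside the family $\{\bmat{D}_{n-1}^{P'}:P'\subseteq Q_{n-1}\}$, so the stated induction hypothesis does not apply, and you do not supply the strengthened hypothesis. The paper avoids this entirely by never peeling the full matrix: it takes a smallest non-firm proper submatrix $\bmat{Y}$ indexed by $I\times J$ and argues on $\bmat{Y}$ directly. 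If $n\notin I$ and $n\notin J$, then $\bmat{Y}$ is literally a submatrix of $\bmat{D}_{n-1}^{P'}$ with $P'\subseteq\{(1,2),(2,1)\}$ --- no spine $?$ ever appears because nothing has been peeled --- and induction applies. Otherwise, Lemma~\ref{lemma_mnf_at_least_two_1s} forces $I$ and $J$ to contain almost all indices, and a case analysis on the largest row index $\ell\notin I$ either produces an explicit isolated set and rectangle cover of equal size, or (for $\ell>3$) finds a simplicial $1$ at $(\ell+1,\ell)$ inside $\bmat{Y}$ whose repeated removal eats the staircase and terminates in a smaller, hence firm, proper submatrix; the $?$s created by that peel all lie in rows and columns that are subsequently deleted, so the problematic spine $?$ never has to enter the induction hypothesis. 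Without some such device, the central "minimally" claim of the lemma remains unproven in your write-up.
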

\begin{proof}
	
		\textbf{I.} 
	For all $P\subseteq Q_n$, $\GG(\bmat{D}_n^P)$ contains the $2n-3$-hole
	\begin{equation}
	\CC_n = \{(3,1), (2,2),(1,3), (4,3), \dots, (3,n)  \},
	\end{equation}
	 and thus $br(\bmat{D}_n^P)\ge n-1$. On the other hand, $\bmat{D}_n$ has a feasible cover using $n-1$ rectangles in which each row $i\not=3$ is covered by a distinct rectangle. 
	
	\textbf{II.} 
	In $\GG(\bmat{D}_n)$, each $(i,j)\in Q_n$ is adjacent to two consecutive vertices of $\CC_n$, and not adjacent to the others.
	For $P\subsetneq Q_n$, let $(\ell,k)\in Q_n\setminus P$ and $S_n$ be an  independent set of $\CC_n$ of size $n-2$ which does not use the two vertices of $\CC_n$ that are adjacent to $(\ell,k)$. Then  $S_n\cup \{(\ell,k) \}$ is a feasible isolated set of $\bmat{D}_n^P$.

	For  $\bmat{D}_n^{Q_n}$, $S_n$ is a feasible isolated set. 
	%
	Suppose that $\bmat{D}^{Q_n}_n$ has an isolated set $T_n$ of size $n-1$.
	Then as $\bmat{D}^{Q_n}_n$ is of size $n\times n$, there is exactly one row and one column that does not have a $1$ in $T_n$.
	Since columns $1$ and $n$ each have a single $1$ which are both in row $3$, exactly one of these $1$s must be in $T_n$.
		(a)
		Suppose that $(3,1)\in T_n$. 
		Then $(3,j)\not \in T_n$ for any $j\not=1$.
		Observe that $(2,2)$ can also not be in $T_n$ as it is adjacent to $(3,1)$.
		But column $2$ only has the $1$s at $(2,2)$ and $(3,2)$, so $T_n$ contains no $1$s from column $2$ and $n$ and it has $n-1$ isolated $1$s from $n-2$ columns, which is a contradiction.
		(b)
		Suppose that $(3,n)\in T_n$. 
		Then $(3,j)\not \in T_n$ for any $j\not=n$.
		As $(3,2)\not \in T_n$, we must have the only available $1$ at $(2,2)$ from column $2$ in $T_n$.
		But then as $(2,2)$ is in a rectangle with $(1,3)$, we cannot have $(1,3)$ in $T_n$. As $(1,3)$ is the only $1$ in row $1$, $T_n$ has no $1$s from row $1$.
		But $T_n$ can also not have any $1$s from row $n$, as row $n$ only has a $1$ at $(n,n-1)$ which is adjacent to $(3,n)\in T_n$. Hence $T_n$ has $n-1$ isolated $1$s from $n-2$ rows, which is impossible. 
	Therefore, $i(\bmat{D}^{Q_n}_n)=n-2$.

\textbf{III.} 
We use induction on $n$. 
For the base case take $n=4$ and observe that $\GG(\bmat{D}_4)$ has  the $5$-hole $\CC_4$ as an only odd hole and $\CC_4$ contains a vertex from each row and column of $\bmat{D}_4$. Therefore, any proper submatrix of $\bmat{D}_4^P$ is superfirm for any $P\subseteq Q_4$.
Assume that for $k<n$, all proper submatrices of $\bmat{D}_k^{P'}$ are firm for any $P'\subseteq Q_k$.
Let $P\subseteq Q_n$, and suppose that not every proper submatrix of $\bmat{D}_n^P$ is firm and let $\bmat{Y}$ be a smallest non-firm proper submatrix indexed by $I\times J$.
Note that we have $n\in I$ or $n\in J$, as otherwise $\bmat{Y}$ is a submatrix of $\bmat{D}_k^{P'}$ for some $k<n$ and $P'\subseteq \{(1,2),(2,1) \}$ and firm by either the induction hypothesis or by parts \textbf{I.} and \textbf{II.} of this proof as $P'\subsetneq Q_k$. 
By the minimality of $\bmat{Y}$ it must be mnf. 
So $\bmat{Y}$ has at least two non-zero entries in each row and column by Lemma \ref{lemma_mnf_at_least_two_1s}.
Hence $n\in I$ implies $n-1,n\in J$ and $n\in J$ implies $3,n\in I$. Thus we must have $3,n\in I$ and $n,n-1\in J$. 
Similarly,  if $i\in I$ for some $i>3$ then $i-1,i\in J$;  if $1\in I$ then $2,3\in J$ and if $1\in J$ then $2,3\in I$.

If $I=[n]$, then by the above we must have $J=[n]\setminus \{1 \}$. Then $(3,2)\cup S_n$ and $\{\{1,2,3 \}\times \{2,3\} \}\cup \RR_n $ with $S_n:=\{(i,i-1): i\in[4,n]:=\{4,\dots,n \}\}$ and $\RR_n:=\{ \{3,i  \}\times \{i-1,i  \} : i\in[4,n] \}$ give a feasible isolated set and rectangle cover of size $n-2$ of $\bmat{Y}$, hence we cannot have $I=[n]$.

So let $\ell$ be the largest row index of $\bmat{D}_n^P$ for which $\ell\not \in I$.
(a)
If $\ell=1$, then $I=[n]\setminus \{1  \}$. Then $[4,n]\subset I$ implies $[3,n]\subseteq J$, 
and $2\in I$ implies that column $1$ or $2$ are in $J$, so let $k\in J\cap \{1,2 \}$.
Then $(3,k)\cup S_n$ and 
$\{\{2,3\}  \times ( J\cap \{1,2,3\})\}\cup \RR_n$
 give a feasible isolated set and rectangle cover of size $n-2$ of $\bmat{Y}$, so $\ell\not =1$.

(b)
If $\ell=2$, then we have $1\not \in J$.
If $1\in I$, then $2,3\in J$ must hold, so we have $I=[n]\setminus\{2 \}$ and $J=[2,n]$. Then $(3,2)\cup S_n$ and $\{\{1,3 \}\times \{2,3\} \}\cup \RR_n $ give a feasible isolated set and rectangle cover of size $n-2$ of $\bmat{Y}$.
If $1\not \in I$, then $2\not \in J$, so we have $I=[3,n]$ and $J=[3,n]$. Then $S_n$ and $\RR$ give a feasible isolated set and rectangle cover of size $n-3$ of $\bmat{Y}$.

(c)
If $\ell>3$, then $(\ell+1,\ell)$ is a simplicial $1$ of $\bmat{Y}$ and its unique maximal rectangle is $\{3,\ell+1\} \times \{\ell,\ell+1 \}$. 
Remove this simplicial $1$ at $(\ell+1,\ell)$.
But then $(\ell+2,\ell+1)$ becomes a simplicial $1$, so it can also be removed. We may repeat this process until at last $(n,n-1)$ becomes a simplicial $1$ and can be removed. Once $(n,n-1)$ is removed, column $n$ only consist of $0$s and a single $?$, hence can be dropped. Let the resulting matrix be $\bmat{Y}'$. As dropping a column which does not have any $1$s does not impact the isolation number and Boolean rank, by Lemma \ref{lemma_removing_simplicial_rectangle} $\bmat{Y}'$ satisfies $i(\bmat{Y}')+n-\ell =i(\bmat{Y})$ and $br(\bmat{Y}') +n-\ell=br(\bmat{Y})$. But then $\bmat{Y}'$ is just a proper submatrix of $\bmat{Y}$ formed by rows $(I\cap [\ell-1]) \times (J\cap [\ell-1])$, so firm. Hence $i(\bmat{Y})=br(\bmat{Y})$ which contradicts $\bmat{Y}$ being mnf. \hfill $\square$
\end{proof}

A proof which is very similar to the above may be applied to class $\bmat{T}_n$ to get our final lemma below and by this completing the proof of Theorem \ref{theorem_four_classes_of_mnf_matrices}.
\begin{lemma}
	For $n\ge 5$, $\bmat{T}_n^P$ is firm for all $P\subsetneq Q_n=\{(1,2),(2,1),(n,n)  \}$ and $\bmat{T}_n^{Q_n}$ is a mnf generalised binary matrix. In addition, $\bmat{T}_5$ is mnsf.
\end{lemma}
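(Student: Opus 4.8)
The plan is to mirror, essentially verbatim, the three--part argument used for $\bmat{D}_n$ in Lemma~\ref{lemma_DnP_firmness_results}, transposing the indices to reflect that in $\bmat{T}_n$ the hub is row $4$ (all $1$s from column $3$ onward, together with a $1$ in column $1$), the ``top'' occupies rows $1,2,3$, and the staircase occupies rows $5,\dots,n$; the recursion $\bmat{T}_n=\SSS^{(4,n-1)}(\bmat{T}_{n-1})$ will drive the induction just as $\bmat{D}_n=\SSS^{(3,n-1)}(\bmat{D}_{n-1})$ drove the previous one. The target values I expect are $br(\bmat{T}_n^P)=n-1$ for every $P\subseteq Q_n$, together with $i(\bmat{T}_n^{Q_n})=n-2$ and $i(\bmat{T}_n^P)=n-1$ for every $P\subsetneq Q_n$; these immediately give that $\bmat{T}_n^{Q_n}$ is non--firm with $i=br-1$ while each $\bmat{T}_n^P$ with $P\subsetneq Q_n$ has $i=br$.

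For the Boolean--rank count (the analogue of part~\textbf{I}) I would exhibit in $\GG(\bmat{T}_n^P)$, for every $P\subseteq Q_n$, the odd hole which for $n=5$ is the $7$--hole
\[
\CC_5=\{(1,4),(3,2),(3,3),(2,3),(4,1),(4,5),(5,4)\},
\]
and in general has size $2n-3$: it threads the four top vertices $(1,4),(3,2),(3,3),(2,3)$, the two hub vertices $(4,1),(4,n)$, and then the staircase climbing from $(n,n-1)$ up to $(5,4)$, closed back to $(1,4)$ through column $4$ and joined to $(4,n)$ through the rectangle $\{4,n\}\times\{n-1,n\}$. Since a $(2n-3)$--hole has clique cover number $n-1$, this forces $br(\bmat{T}_n^P)\ge n-1$, and an explicit cover by $n-1$ rectangles gives equality. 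For the isolation numbers (part~\textbf{II}) the key observation is that after the $?$--substitutions columns $1$ and $n$ of $\bmat{T}_n^{Q_n}$ each retain a single $1$, namely $(4,1)$ and $(4,n)$, both lying in the hub row $4$; an isolated set meeting $n-1$ of the $n$ lines must therefore use exactly one of them, and the two resulting cases close by the same line--counting contradiction as in part~\textbf{II} of Lemma~\ref{lemma_DnP_firmness_results}, yielding $i(\bmat{T}_n^{Q_n})=n-2$. For $P\subsetneq Q_n$ each member of $Q_n$ is adjacent to exactly two consecutive vertices of $\CC_n$, so the vertex of $Q_n\setminus P$ can be adjoined to a suitable independent set of $\CC_n$ to reach size $n-1$.

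The substantive work is part~\textbf{III}: the induction showing every proper submatrix of $\bmat{T}_n^P$ is firm. I would take the smallest non--firm proper submatrix $\bmat{Y}=I\times J$, which is then mnf, and first argue that $n\in I$ or $n\in J$ (otherwise $\bmat{Y}$ is a submatrix of some $\bmat{T}_k^{P'}$ with $k<n$ and $P'\subsetneq Q_k$, firm by the induction hypothesis or by parts~\textbf{I}--\textbf{II}). Lemma~\ref{lemma_mnf_at_least_two_1s}, applied to the two--nonzero rows and columns at the bottom right, then forces $4,n\in I$ and $n-1,n\in J$. Letting $\ell$ be the largest row index absent from $I$, if $\ell\ge 5$ then $(\ell+1,\ell)$ is a simplicial $1$ of $\bmat{Y}$ with unique maximal rectangle $\{4,\ell+1\}\times\{\ell,\ell+1\}$, so removing it and cascading down the staircase to $(n,n-1)$ (after which column $n$ carries only $0$s and a single $?$ and can be dropped) reduces $\bmat{Y}$ to a strictly smaller, hence firm, submatrix while preserving $br-i$ by Lemma~\ref{lemma_removing_simplicial_rectangle}, a contradiction; the finitely many remaining cases $I=[n]$ and $\ell\in\{1,2,3,4\}$ I would dispatch by writing down an explicit isolated set and rectangle cover of equal size, forcing $i(\bmat{Y})=br(\bmat{Y})$. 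The base case $n=5$ is handled by checking directly that every odd hole of $\GG(\bmat{T}_5)$ meets every row and every column, so that any proper submatrix of $\bmat{T}_5^P$ is superfirm.

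Finally, the minimal non--superfirmness of $\bmat{T}_5$ follows because the witness $\bmat{T}_5^{Q_5}$ already has $i=3<4=br$, so $\GG(\bmat{T}_5)$ is imperfect and hence, by Theorem~\ref{theorem_no_odd_holes_then_superfirm}, contains an odd hole; the same ``every odd hole meets every line'' fact then shows that deleting any row or column destroys all odd holes and, by Lemma~\ref{lemma_no_C3_no_oddhole}, leaves a graph with neither odd holes nor odd antiholes, i.e.\ a superfirm proper submatrix. I expect the main obstacle to be part~\textbf{III}, and specifically the top cases $\ell\in\{1,2,3,4\}$: because the first three rows of $\bmat{T}_n$ carry a richer pattern than the two top rows of $\bmat{D}_n$, each of these cases needs its own carefully matched isolated--set/cover pair, and this is where the bookkeeping is most delicate; verifying the base matrix $\bmat{T}_5$ by hand is the other place where care is required.
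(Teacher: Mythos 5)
Your proposal follows exactly the route the paper intends: the paper gives no separate argument for $\bmat{T}_n$ but states that the proof of Lemma~\ref{lemma_DnP_firmness_results} for $\bmat{D}_n$ carries over, and your three-part plan (odd hole of size $2n-3$ for the Boolean rank, the column-$1$/column-$n$ counting argument for the isolation number, and the induction via cascading removal of the simplicial $1$s $(\ell+1,\ell),\dots,(n,n-1)$) is precisely that transposition; your $7$-hole for $\bmat{T}_5$ and the values $br=n-1$, $i(\bmat{T}_n^{Q_n})=n-2$ check out. One concrete correction: for $n\ge 6$ the general $(2n-3)$-hole cannot consist only of the subdiagonal staircase vertices $(n,n-1),\dots,(5,4)$ together with the four top and two hub vertices --- consecutive subdiagonal entries $(i,i-1)$ and $(i+1,i)$ are not adjacent in $\GG(\bmat{T}_n)$ since $x_{i+1,i-1}=0$, and that vertex set has only $n+2$ elements; as in the $\bmat{D}_n$ case you must interleave the diagonal entries $(i,i)$ for $5\le i\le n-1$, giving the cycle $(1,4),(3,2),(3,3),(2,3),(4,1),(4,n),(n,n-1),(n-1,n-1),(n-1,n-2),\dots,(5,5),(5,4)$ of the correct length $2n-3$. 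Also note $\ell=4$ cannot occur in your case split since $n\in J$ already forces $4\in I$, so the residual explicit cases are only $I=[n]$ and $\ell\in\{1,2,3\}$; these, as you say, require the matched isolated-set/cover pairs and are the genuinely laborious part.
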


\section{Conclusion}\label{section_conclusion}
In this paper, we studied firm and superfirm binary matrices. We showed that superfirmness is equivalent to having no odd holes in the rectangle cover graph. Then we presented four infinite classes of minimally non-firm binary matrices. 

We close with two future research directions. We suspect that every minimally non-superfirm matrix is firm and any minimally non-firm matrix $\bmat{X}\in\B^{m\times n}$ satisfies $|m-n|\le 1$.




\subsubsection{Acknowledgements}{I am very grateful to Ahmad Abdi for helping me begin studying firm matrices and for all the invaluable comments during our discussions.}
%

%

 \bibliographystyle{splncs04}
 \bibliography{refs_short}

%
%
%
%
%
\end{document}